\documentclass{amsart}
\def\data{30 September 2017}

\usepackage{amssymb}
\usepackage{amsfonts}
\usepackage{amsmath}
\usepackage{amsthm}
\usepackage{url}
\usepackage{color}
\usepackage[small,nohug,heads=LaTeX]{diagrams}
\diagramstyle[labelstyle=\scriptstyle]
\usepackage{graphicx}
\usepackage{mathabx}

\newcommand{\bq}{\begin{quote}}
\newcommand{\eq}{\end{quote}}
\newcommand{\bi}{\begin{itemize}}
\newcommand{\ei}{\end{itemize}}
\newcommand{\bd}{\begin{description}}
\newcommand{\ed}{\end{description}}
\newcommand{\ben}{\begin{enumerate}}
\newcommand{\een}{\end{enumerate}}
\newcommand{\bbm}{\begin{bmatrix}}
\newcommand{\ebm}{\end{bmatrix}}
\newcommand{\bea}{\begin{eqnarray*}}
\newcommand{\eea}{\end{eqnarray*}}

\newtheorem{theorem}{Theorem}[section]

\newtheorem{lemma}[theorem]{Lemma}

\newtheorem{corollary}[theorem]{Corollary}

% Macroses for black boxes

\newcommand{\bx}[1]{\mathsf{#1}}     % Notation for BB elements and BB groups and subgroups
\newcommand{\bxg}[1]{\boldsymbol #1} % Greek in BB context (mostly maps)

% canonical structures

% Blackboard bold

\def\bC{\mathbb{C}}
\def\bF{\mathbb{F}}

\def\bP{\mathbb{P}}

\def\bR{\mathbb{R}}

% Projective plane

\def\Ii{\mathfrak{I}}
\def\Pp{\mathfrak{P}}
\def\Qq{\mathfrak{Q}}

% Lie algebras
\def\lie{\mathfrak{l}}

\def\lsl{\mathfrak{sl}}
\def\tor{\mathfrak{t}}
\def\lia{\mathfrak{a}}

% Black box groups and fields
\def\AA{\mathsf{A}}
\def\BB{\mathsf{B}}
\def\CC{\mathsf{C}}
\def\EE{\mathsf{E}}
\def\FF{\mathsf{F}}

\def\HH{\mathsf{H}}
\def\KK{\mathsf{K}}

\def\NN{\mathsf{N}}

\def\SS{\mathsf{S}}
\def\TT{\mathsf{T}}
\def\UU{\mathsf{U}}
\def\VV{\mathsf{V}}
\def\WW{\mathsf{W}}
\def\XX{\mathsf{X}}
\def\YY{\mathsf{Y}}
\def\ZZ{\mathsf{Z}}

% Black box points and lines

\newcommand{\jj}{\mathsf{j}}
\def\kk{\mathbf{k}}
\def\ll{\mathbf{l}}
\def\uu{\mathbf{u}}
\def\polar{\boldsymbol{\pi}}

% operators
\def\Tr{\mathop{{\rm Tr}}}

\def\Lie{\mathop{{\rm Lie}}}

%\def{\cross}{\boxtimes}
% Algebraic groups

\def\Alt{{\rm Alt}}
\def\Sym{{\rm Sym}}
\def\2G2{\ensuremath{^2{\rm G}_2}}
\def\sl{{\rm{SL}}}
\def\gl{{\rm{GL}}}

\def\psl{{\rm{PSL}}}
\def\pgl{{\rm{PGL}}}

\def\so{{\rm{SO}}}

\def\qpone{q \equiv 1  \bmod 4}
\def\qmone{q \equiv -1 \bmod 4}
\def\fmone{|\bF| \equiv -1 \bmod 4}
\def\fpone{|\bF| \equiv 1 \bmod 4}

% Various

\AtBeginDocument{%
  \def\MR#1{}
  }

\begin{document}

\title[Black box groups $\psl_2(\bF_q)$]{Adjoint representations of black box groups $\psl_2(\bF_q)$}
\date{\data}

\author{Alexandre Borovik}
\address{School of Mathematics, University of Manchester, UK; alexandre@borovik.net}
\author{\c{S}\"{u}kr\"{u} Yal\c{c}\i nkaya}
\address{Department of Mathematics, Istanbul University, Turkey; sukru.yalcinkaya@istanbul.edu.tr}

\subjclass{Primary 20P05, Secondary 03C65}

\begin{abstract}

Given a black box group $\YY$ encrypting $\psl_2(\bF)$ over an unknown field $\bF$ of unknown odd characteristic $p$ and a global exponent $E$ for $\YY$ (that is, an integer $E$ such that $\bx{y}^E=1$ for all $\bx{y} \in \YY$), we present a Las Vegas algorithm which constructs a unipotent element in $\YY$. The running time of our algorithm is polynomial in $\log E$. This answers the question posed by Babai and Beals in 1999. We also find the characteristic of the underlying field in time polynomial in $\log E$ and linear in $p$.

Furthermore, we construct, in probabilistic time polynomial in $\log E$,
\begin{itemize}
\item a black box group $\XX$ encrypting $\pgl_2(\bF) \cong\so_3(\bF)$, its subgroup $\YY^\circ$ of index $2$ isomorphic to $\YY$ and a probabilistic polynomial in $\log E$ time isomorphism $\YY^\circ \longrightarrow \YY$;
\item a black box field $\KK$,  and
\item polynomial time, in $\log E$, isomorphisms
\[
 \so_3(\KK) \longrightarrow \XX \longrightarrow \so_3(\KK).
\]
\end{itemize}

If, in addition, we know $p$ and the standard explicitly given finite field $\bF_q$  isomorphic to the field $\bF$ then we construct, in time polynomial in $\log E$, isomorphism
\[
\so_3(\bF_q)\longrightarrow  \so_3(\KK).
\]

Unlike many papers on black box groups, our algorithms make no use of additional oracles other than the black box group operations. Moreover, our result acts as an $\sl_2$-oracle in the black box group theory.

We implemented our algorithms in GAP and tested them for groups such as $\psl_2(\bF)$ for $|\bF|=115756986668303657898962467957$ (a prime number).
\end{abstract}

\maketitle

\section{Introduction}
\subsection{The principal results}

Black box groups were introduced by Babai and Szemer\'{e}di \cite{babai84.229} as an idealized setting for randomized algorithms for solving permutation and matrix group problems in computational group theory. A black box group is a black box (or an oracle, or a device, or an algorithm) operating with $0$--$1$ strings of uniform length which encrypt (not necessarily in a unique way) elements of some finite group. In various classes of black box problems the isomorphism type of the encrypted group could be known in advance or unknown.

We denote a black box group encrypting a group $X$ by using the same letter in sans serif,
$\XX$,  and we apply the same convention to the strings $\bx{x}$ produced by $\XX$ which correspond to the group element $x \in X$.

All black box groups in this paper are assumed to satisfy Axioms BB1--BB4 stated in Sections \ref{sec:BB1--BB3} and \ref{sec:BB4}.
In particular, we assume that for every black box group $\XX$ we are given a global exponent, that is, an integer $E$ such that $x^E=1$ for all $x \in X$, and the computation of $\bx{x}^E$ is feasible.

In this paper, we present an algorithm which solves the old problem by Babai and Beals \cite[Problem~10.1]{babai99.30} that remained open since 1999. We prove the following theorem.

\begin{theorem}\label{cor:unipotent} Let $\YY$ be a black box group  encrypting $Y=\psl_2(\bF)$, where $\bF$ is an unknown finite field of unknown odd characteristic $p$ and let $E$ be a global exponent for $\YY$. Assume also that $|\bF|\geq 7$. Then there exists a Las Vegas algorithm which constructs a string representing a non-trivial unipotent element from $Y$in time polynomial in $\log E$. In particular, the characteristic $p$ of the underlying field can be found  in time  polynomial in $\log E$ and linear in $p$.
\end{theorem}

For a discussion of randomized algorithms and, in particular,  Las Vegas algorithms, see Section~\ref{sec:Monte-Carlo}.

In case of $p=2$, the Babai-Beals problem has been solved by Kantor and Kassabov \cite{kantor15.16}, and in Section~\ref{sec:even-case} we briefly discuss how our methods can also be applied to this case.

We exclude small cases $|\bF| < 7$ from consideration in this paper because they -- and, more generally, black box recognition of classical matrix groups over small fields are comprehensively treated in the memoir by Kantor and Seress \cite{kantor01.168}.

Note that, in Theorem \ref{cor:unipotent}, we do not have any information about the ground field of the group $Y$. However, we use some form of an upper bound on the size of this field which is implicitly present in the global exponent $E$. Note also that we construct a unipotent element without knowing its order and our algorithm is Las Vegas due to the unipotency test, Lemma~\ref{lem:test:uni}. To find the characteristic of the underlying field, we construct a unipotent element and then find its order.

In the special case when our black box group $\YY$ is explicitly represented by matrices, Theorem~\ref{cor:unipotent} takes the form that also remained unknown until now.

\begin{corollary}\label{cor:matrix}
Let $p$ be an odd prime number.
Given matrices $g_1, \dots, g_m$ in   the group $\gl_n(\bF_{p^k})$ of invertible matrices over a finite field\/ $\bF_{p^k}$ of odd characteristic $p$ which generate subgroup $G$ isomorphic to $\sl_2(\bF_{p^l})$, $p^l\geq 7$, we can find in $G$ a non-trivial unipotent element in probabilistic time  polynomial in $k,l,m,n$ and\/ $\log p$.
Our algorithm is Las Vegas.
\end{corollary}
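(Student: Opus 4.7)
The plan is to regard the subgroup $G=\la g_1,\ldots,g_m\ra$ as a black box group $\YY$ and invoke the stronger (known-characteristic) form of Theorem~\ref{cor:unipotent}. Elements of $\YY$ are encoded by their $n\times n$ coefficient matrices over $\bF_{p^k}$, i.e.\ strings of length $O(n^2 k\log p)$; the black box returns products, inverses, and identity tests via the standard matrix operations in $\gl_n(\bF_{p^k})$, each at cost polynomial in $n$, $k$ and $\log p$.

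The only ingredient demanded by Theorem~\ref{cor:unipotent} that is not immediately visible is a global exponent, and here we must avoid depending on $l$, which is not assumed to be known. The natural workaround is to use the order of the ambient group,
\[
E \; := \; |\gl_n(\bF_{p^k})| \; = \; p^{kn(n-1)/2}\prod_{i=1}^{n}\bigl(p^{ki}-1\bigr).
\]
Since $G\le \gl_n(\bF_{p^k})$, its exponent divides $E$, so $y^E=1$ for every $y\in\YY$ and $E$ is a valid global exponent for $\YY$. Manifestly $\log E = O(n^2 k\log p)$, and $E$ can be written down directly from $n$, $k$ and $p$.

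With $\YY$, the characteristic $p$, and the exponent $E$ in hand, the second clause of Theorem~\ref{cor:unipotent} yields a non-trivial unipotent element of $\YY$ in time polynomial in $\log E$, hence polynomial in $n$, $k$ and $\log p$. Each call to the black box costs polynomial time in $n$, $k$, $\log p$, and reading the $m$ generators contributes $O(mn^2 k\log p)$; the total running time is therefore polynomial in $k$, $l$, $m$, $n$ and $\log p$, as required (in fact $l$ never enters the bound). The only step calling for any care---rather than a genuine obstacle---is the manufacture of a global exponent without knowing $l$, which is disposed of by passing from the exponent of $G$ itself to that of the enveloping $\gl_n(\bF_{p^k})$, a quantity depending only on data visible from the matrix representation.
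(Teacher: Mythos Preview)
Your approach is the intended one: the paper gives no separate proof of this corollary, treating it as the matrix-group specialisation of Theorem~\ref{cor:unipotent}, and your choice of $E=|\gl_n(\bF_{p^k})|$ as global exponent (with $\log E=O(n^2k\log p)$) is exactly the standard move alluded to in Section~\ref{sec:BB4}.

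There is, however, one point you glossed over. Theorem~\ref{cor:unipotent} is stated for a black box group encrypting $\psl_2$, whereas here $G\cong\sl_2(\bF_{p^l})$, which has a nontrivial centre of order~$2$. You therefore cannot apply the theorem to $\YY$ as it stands; you must first pass to $\YY/Z(\YY)\cong\psl_2(\bF_{p^l})$. This is easily arranged: in $\sl_2$ over a field of odd order the only involution is the central one $z$, so $z$ is obtained by powering any element of even order down to an involution; one then modifies the identity test in the black box so that $x$ and $xz$ are declared equal. Theorem~\ref{cor:unipotent} now produces a nontrivial unipotent element $\bar u$ in the quotient; any preimage $u\in G$ has order $p$ or $2p$, and in either case $u^2$ is a nontrivial unipotent element of $G$. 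With this adjustment your argument is complete.
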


Our next result is a solution to the problem of recognizing a black box group encrypting the group $\psl_2(\bF)$ defined over an unknown field of unknown odd characteristic.

\begin{theorem}\label{SL2-SO3}
Let $\YY$ be a black box group encrypting $\psl_2(\bF)$, where $\bF$ is an unknown field of unknown odd characteristic $p$ and let $E$ be a global exponent for $\YY$. Assume also that $|\bF|\geq 7$. Then we construct, in probabilistic time polynomial in $\log E$,

\begin{itemize}
\item[(a)] a black box group $\XX$ encrypting the group $\so_3(\bF)$, its subgroup $\YY^\circ$ of index $2$ isomorphic to $\YY$ and a probabilistic polynomial in $\log E$ time isomorphism $\YY^\circ \longrightarrow \YY$;
\item[(b)] a black box field $\KK$, and
\item[(c)]   polynomial time, in $\log E$, isomorphisms
\[
\so_3(\KK) \longrightarrow \XX \longrightarrow \so_3(\KK).
\]
\end{itemize}
Our algorithms are Las Vegas. If, in addition, we know $p$ and the standard explicitly given finite field $\bF_q$ of characteristic $p$ isomorphic to $\bF$ then we  construct, in $\log E$-time,  an isomorphism
\[
\so_3(\bF_q)\longrightarrow  \so_3(\KK).
\]
\end{theorem}

Since, by Theorem~\ref{cor:unipotent}, we can find the characteristic $p$ of the underlying field in time linear in $p$ and polynomial in $\log E$,
and also because finding efficient two-way isomorphisms between  a black box field of order $p^n$ and an explicitly given standard field can be done in time linear in $p$ and polynomial in time $n\log p$ (see Section \ref{sec:bbfields}),
we have a stronger result:

\begin{corollary}
Let $\XX$ be a black box group encrypting $\so_3(\bF)$, where $\bF$ is an unknown field of unknown odd characteristic $p$ and let $E$ be a global exponent for $\XX$. Assume also that $|\bF|\geq 7$. Then we construct, in time linear in $p$ and polynomial in\/ $\log E$, an isomorphism
\[
\XX\longleftrightarrow \so_3(\bF_q),
\]
where $\bF_q$ is the standard explicitly given finite field isomorphic to $\bF$.
\end{corollary}

We note here that our algorithm fully replaces the so-called ``\emph{$\sl_2$-oracle}'', an assumption of existence of two-way polynomial-time isomorphism between arbitrary black box group encrypting $\sl_2(\bF_{p^k})$ and the group $\sl_2(\bF_{p^k})$ over the standard explicitly given field $\bF_{p^k}$. Moreover, we do not use discrete logarithm oracle on finite fields in our algorithms as opposed to {majority of the existing algorithms for black box groups. (Given a generator $x$ of $\bF^*$ and a random element $y \in \bF$, a \emph{discrete logarithm oracle} finds an integer $k$ which satisfies $x^k=y$.)  The first use of an ``$\sl_2$-oracle'' appeared in 2001;
quite a number of papers referring to $\sl_2$-oracle and discrete logarithm oracle followed \cite{brooksbank03.162,brooksbank08.885,brooksbank01.95,brooksbank06.256}. The present paper together with \cite{BY2010, BY2013C} shows the way to eliminate discrete logarithm oracle and $\sl_2$-oracle entirely from black box recognition problems for classical groups of odd characteristic.

By replacing the axiom BB4 with BB5 (stated in Subsection \ref{sec:Euclid-Lobachevsky}), that is, removing the assumption of knowing a global exponent $E$ for the black box groups and assuming to have a function which computes square roots of group elements, when exist, we have the following more general result. It explains, in particular, why the characteristic of the field is not used in our Theorems \ref{cor:unipotent}  and \ref{SL2-SO3}.

\begin{theorem} $\YY$ be a black box group which satisfies axioms \emph{BB1--BB3} and encrypts the group $\psl_2(\bF)$  over some unknown finite field  $\bF$ of unknown odd characteristic $p$ with $|\bF|\geq 7$. Assume also that Axiom \emph{BB5} holds in $\YY$. Then we can construct, in probabilistic time polynomial in $\log E$,
\begin{itemize}
\item[(a)] a black box group $\XX$ encrypting the group $\so_3(\bF)$, its subgroup $\YY^\circ$ of index $2$ isomorphic to $\YY$ and a probabilistic polynomial in $\log E$ time isomorphism $\YY^\circ \longrightarrow \YY$;
\item[(b)] a black box field $\KK$, and
\item[(c)]   polynomial time, in $\log E$, isomorphisms
\[
\so_3(\KK) \longrightarrow \XX \longrightarrow \so_3(\KK).
\]
\end{itemize}
Our algorithms are Las Vegas. \label{th:square-roots}
\end{theorem}

We record this result here, but its discussion will be published by us elsewhere; it is linked to work by Ahmavaara \cite{YA1, YA2, YA34},  Kustaanheimo \cite{KH}, and other theoretical physicists who attempted to build a model of quantum mechanics based on a very big prime field. Recently, this theory was revisited by Zilber \cite{Zilber2012} who used a model-theoretic approach to the idea of ``quantization by looking at everything as if it  happens in a finite field''.  Our Theorem~\ref{th:square-roots} appears to fit well into an emerging theory linking model theory with physics: in view of \cite[Proposition 5.2]{Zilber2012}, finite groups $\pgl_2(\bF)$ \emph{structurally approximate}, in some explicitly defined sense,  the Minkowski group $\psl_2(\bC)$.

\subsection{A very brief outline of the proof}

The proof of Theorem~\ref{SL2-SO3} will be achieved as a sequence of steps some of which are interesting on their own.
 \bi
  \item[(a)] We construct a new black box group $\XX$ encrypting $\so_3(\bF)$, a group which contains $\psl_2(\bF)$ as a subgroup $\YY^\circ$ of index $2$, and map
  \[
\YY^\circ \hookrightarrow \YY,
\]
see Theorem~\ref{psl2-pgl2}.
\item[(b)] Using involutions in $\XX$, we construct a black box projective plane $\Pp$ that encrypts the projective plane of the $3$-dimensional space of the adjoint representation of $\pgl_2(\bF)\simeq \so_3(\bF)$ on its Lie algebra $\lie = \lsl_2(\bF)$. We describe how to produce random points in $\Pp$, describe lines through two points, construct intersection of two lines, etc. We shall note here that we do not list the elements in $\Pp$ or in any line in $\Pp$. Furthermore, the plane $\Pp$ has polarity induced by the Killing form on its underlying Lie algebra. The Lie algebra product induces on $\Pp$ a partial binary operation which we are able to compute using black box methods; we denote this operation by $\boxtimes$, call \emph{cross product} and systematically use in algorithms developed in the paper.
\item[(c)] We introduce a set of tools which allows us to coordinatize $\Pp$ by homogeneous coordinates over a black box field $\KK$ constructed in the projective plane $\Pp$. To that end, we use a classical coordinatization of Desarguesian projective planes developed by Hilbert.
\item[(d)] We use the action of $\XX$ on $\Pp$ to construct a matrix representation
\[
    \XX \longrightarrow \so_3(\KK) .
\]
where $\so_3(\KK)$ is realized as a group of $3\times 3$ matrices over the black box field $\KK$.

\item[(e)] Coordinatizing $\so_3(\KK)$ in a similar way, we construct  an isomorphism
\[
     \so_3(\KK) \longrightarrow \XX.
\]
\item[(f)]   The map
\[
\so_3(\bF_q) \longrightarrow \so_3(\KK)
\]
is constructed from the isomorphism
\[
\bF_q \longrightarrow \KK
\]
from the explicitly given finite field $\bF$ onto a black box field $\KK$.  We use a result by Maurer and Raub  \cite{maurer07.427} formulated in our paper as Theorem~\ref{th:bbfiekds} to construct this isomorphism. This is a polynomial time algorithm which runs in time $\log |\bF_q|$. (All known algorithms for the inverse isomorphism
\[
\bF_q \longleftarrow \KK
\]
are reduced to solving discrete logarithm problem in $\bF_p$, the prime subfield of $\bF_q$, so finding polynomial time algorithms which construct two-way isomorphisms represent a major open problem in algebraic cryptography).
\item[(g)] Section \ref{sec:complexity} contains analysis of  complexities of all algorithms used in the proof.
\ei

\subsection{Monte-Carlo  and Las Vegas algorithms}\label{sec:Monte-Carlo}

Recall that a \textit{Monte-Carlo algorithm} is a randomized algorithm which gives a correct output  with probability strictly bigger than $1/2$.
A special case of Monte--Carlo algorithms is a \textit{Las Vegas algorithm} which either outputs a correct answer or reports failure. A detailed comparison of Monte--Carlo and Las Vegas algorithms, both from practical and theoretical point, can be found in \cite{babai97.1}.

By the nature of our axioms, many algorithms for black box groups (in the sense of Axioms BB1--BB4) are Monte-Carlo. In case of decision problems, where the output is ``yes'' or ``no'', the error of a Monte--Carlo algorithm can be made arbitrarily close to 0 by repeatedly re-running it. So answers to questions of the kind ``Is this black box group isomorphic to the given group?''  can be made as precise as we wish. In this paper, algorithms are Las Vegas.

\subsection{Terminology and notation}

In what follows we  make extensive use of the language of projective geometry, see, for example Coxeter \cite{coxeter2003projective} and Hartshorne \cite{hartshorne67}. Group theoretic terminology mostly follows \cite{gorenstein1994}.

\subsection{Organization of the paper}
In Section \ref{sec:blackboxgroups}, we discuss the axioms of black box groups.   Section \ref{sec:bbfields} contains a brief discussion of black box fields. In Section \ref{sec:moraproto}, we introduce morphisms and protomorphisms of black box groups and the crucially important procedure that we call  ``reification of an involution". We also explain how our arguments work in even characteristic producing a unipotent element in $\psl_2(2^n)$. In Section \ref{sec:application-reification}, we present applications of reification of involutions, in particular, we construct a black box group encrypting $\so_3(\bF)$ from a black box group encrypting $\psl_2(\bF)$. In Section \ref{sec:geometryofinvolutions}, we discuss the geometry of involutions in $\so_3(\bF)$, and in Section \ref{sec:projective-plane} we construct a black box projective plane and present algorithms for additional operations and relations coming from the underlying Lie algebra.   In Section \ref{sec:sym4}, we construct a black box subgroup encrypting $\Sym_4$ in a black box group encrypting $\so_3(\bF)$ (it provides us with an orthogonal basis in the projective plane with a polarity) and in Section \ref{sec:coordinatisation}, we apply Hilbert's coordinatization to the black box projective plane and construct a black box field. In Section \ref{sec:serendipity}, we prove Theorem \ref{cor:unipotent} and in Section \ref{sec:coordinatisationaction}, we prove Theorem \ref{SL2-SO3}. In Section \ref{sec:complexity}, we give the complexities of the procedures presented in this paper.

\section{Black box groups}\label{sec:blackboxgroups}

\subsection{Axioms for black box groups}\label{sec:BB1--BB3}

What follows are slightly modified Babai-Szmer\'{e}di axioms.

The concept of a black box can be applied to rings, fields, and, as we can see in this  paper, even to projective planes. So, we formulate our axioms for groups but use the wording which makes them applicable to other algebraic structures. This explains why we are using the length $l(\XX)$ of strings produced by a black box $\XX$ as a proxy for the complexity of a black box $\XX$: it is applicable to a variety of structures.

In our algorithms, we have to work with several black box groups at once and build new black boxes (sometimes for the same abstract group, sometimes for different groups) from existing ones. For that reason we specify the functionality of \emph{a family} $\mathcal{X}$ of \emph{black boxes} $\XX$ by the following axioms.

\begin{itemize}
\item[\textbf{BB1}] On request, each $\XX$ produces a binary string of fixed length $l(\XX)$ (which depends on $\XX$) encrypting a random (almost) uniformly distributed element from some fixed group $X$; this is done in probabilistic time polynomial in $l(\XX)$.
\item[\textbf{BB2}] Each $\XX$ computes, in probabilistic time polynomial in $l(\XX)$, a string encrypting the product of two strings or an inverse of a string (that is, a string encrypting the inverse of an element given by a string).
\item[\textbf{BB3}] Each $\XX$ decides, in probabilistic time polynomial in $l(\XX)$, whether two strings encrypt the same element in its group $X$ -- therefore identification of strings agrees with the canonical projection
\begin{diagram} \XX & \rDotsto^{\pi_\XX} & X.
 \end{diagram}
\end{itemize}

If Axioms BB1--BB3 hold for a particular black box group $\XX$, we say that $\XX$ is a \emph{black box over $X$}, or that a black box $\XX$ \emph{encrypts} the group $X$. Notice that we are not making any assumptions of practical computability or the time complexity of the projection $\pi_\XX$. We will discuss our set up and terminology further in Section \ref{sec:blackboxsubgroups}.

In respect of Axiom BB1, we note here that a black box group $\XX$ encrypting a finite group $X$ may not necessarily be given by generators: see, for example, discussion of black boxes for centralizers of involutions \cite{borovik02.7}, or discussion of links between black box algebra and homomorphic encryption \cite{BY2017C}. When a black box group $\XX$ is given by some generators, that is, some strings $\bx{x}_1,\bx{x}_2,\ldots, \bx{x}_m$ in $\XX$ such that $X=\langle \pi_\XX(\bx{x}_1), \pi_\XX(\bx{x}_2), \dots, \pi_\XX(\bx{x}_m)\rangle$, then producing random elements from $\XX$ can be done by using either the algorithm presented in \cite[Theorem 1.1]{babai91.164} or the algorithm called ``the product replacement algorithm'' \cite{celler95.4931}. It turned out that the product replacement algorithm is much more practical and we refer reader to \cite{babai04.215, lubotzky01.347, pak.01.476, pak01.301} for its detailed analysis.

In this paper, we systematically build new black box groups from old ones, and use randomized algorithms for their constructions. In this situation, operations in these new black boxes are performed by randomized algorithms -- this explains the randomization introduced in Axioms BB2 and BB3.

A typical example of a black box group is provided by a group $X$ generated in a big matrix group $\gl_n(r^k)$ by several matrices $x_1,\dots, x_l$. We can, of course, multiply, invert, compare matrices. Therefore computer routines for these operations together with the sampling of the product replacement algorithm run on a  tuple of generators $(x_1,\dots, x_l)$ can be viewed as a black box $\XX$ encrypting the group $X$.

\subsection{Global exponent and Axiom BB4}\label{sec:BB4}

Notice that, even in routine examples, the number of elements of a matrix group $X$ could be astronomical. This makes many natural questions about the black box $\XX$ over $X$ -- for example, finding the order of $X$ or the isomorphism type of $X$ when $X$ is given as a simple group of Lie type  -- inaccessible for all known deterministic methods. Even when $X$ is cyclic, existing approaches to finding its order are conditional and involve either the discrete logarithm problem or  prime factorization of large  integers.

Nevertheless black box problems for finite groups frequently have a feature which makes them more accessible:

\begin{itemize}
\item[\textbf{BB4}] We are given a \emph{global exponent} of $\XX$, that is, a natural number $E:=E(\XX)$ such that
\begin{itemize}
\item[$\bullet$] $\bx{x}^E = 1$ for all strings $\bx{x}$ produced by  $\XX$; and
\item[$\bullet$] $\log E$ is polynomially bounded in terms of $l(\XX)$.
\end{itemize}
\end{itemize}

For example, if $\XX$ is a black box group arising from a subgroup in the known ambient group $G$, the exponent of $G$ can be taken for a global exponent of $\XX$.

If we know the factorization of $E$ into prime factors then we can find the order of any element $\bx{x}$  produced by $\XX$ as the minimal divisor $e$ of $E$ such that $\bx{x}^e=\bx{1}$. However, we wish to work with linear groups over fields of large characteristic where factorization of $E$ is becoming unfeasible. Our approach allows us to avoid determination of orders of random elements from $\XX$ and consequently avoid making any assumptions about the prime factorization of the global exponent.

\subsection{Axiom BB5} \label{sec:Euclid-Lobachevsky}

It is important to observe that our proof of Theorem~\ref{SL2-SO3} uses the global exponent $E$ and Axiom BB4 only for computing square roots of semisimple elements in $\YY$ and $\XX$ (this is done by Tonelli-Shanks algorithm, Lemma~\ref{lm:Tonelli-Shanks}). Therefore Axiom BB4  can be replaced by its corollary, Axiom BB5 -- see Theorem \ref{th:square-roots}.

\begin{itemize}
\item[\textbf{BB5}] We are given a partial $1$- or $2$-valued function $\rho$ of two variables on a subset $\SS\subset \XX$ that computes, in probabilistic time polynomial in $l(\XX)$, square roots in cyclic subgroups of $\XX$ in the following sense:
    \bq
    if $\bx{x}\in \SS$ and $\bx{y}\in \langle \bx{x}\rangle$ has square roots in $\langle \bx{x}\rangle$ then $\rho(\bx{x},\bx{y})$ is the set of these roots.
    \eq
\end{itemize}

In particular,
\bi
\item
if $|\bx{x}|$ is even, $\rho(\bx{x},1)$ is the subgroup of order $2$ in $\langle \bx{x} \rangle$;
\item if $|\bx{x}|$ is even then, consecutively applying $\rho(\bx{x}, \cdot)$ to $2$-elements in $\langle \bx{x} \rangle$, we can find $2$-elements in $\langle \bx{x} \rangle$ of every order present;
\item if $|\bx{x}|$ is odd, and $\bx{x}\in \langle \bx{x}\rangle$ then $\rho(\bx{x},\bx{x})$ is the unique square root of $\bx{x}$ in $\langle \bx{x} \rangle$.
\ei

We emphasize that Axiom BB5 provides everything needed for construction of centralizers of involutions by the maps $\zeta_0$ and $\zeta_1$, Section~\ref{sec:centralizerofprotoinvolution}.

Axiom BB5 follows from BB4 by  Lemma~\ref{lm:Tonelli-Shanks}, applied to the cyclic group $\langle x \rangle$.

\section{Black box fields} \label{sec:bbfields}

We define black box fields using, by analogy with black box groups,  Axioms BB1--BB3, with a few obvious changes in the wording,  and with Axiom BB2 covering the addition, multiplication, and inversion in the field. The reader may wish to compare our exposition with \cite{boneh96.283}.  We remind that, in this paper, we do not necessarily know the characteristic of the field. Therefore we slightly generalize the definition of a black box field given in \cite{boneh96.283, maurer07.427} by removing the assumption that the characteristic of the field is known. We refer the reader to \cite{boneh96.283, maurer07.427} for more details of black box fields of known characteristic.

We shall be using some results about the isomorphism problem for black box fields of known characteristic $p$ \cite{maurer07.427}, that is, the problem of constructing an isomorphism and its inverse between a black box field $\KK$ and an explicitly given finite field $\bF_{p^n}$. The explicit data for a finite field of cardinality $p^n$ is defined to be a system of {\emph{structure constants} over the prime field, that is,} $n^3$ elements $(c_{ijk})_{i,j,k=1}^n$ of the prime field {$\mathbb{F}_p = \mathbb{Z}/p\mathbb{Z}$ (represented as integers in $[0,p-1]$)} so that $\mathbb{F}_{p^n}$ becomes a field with ordinary addition and multiplication by elements of $\mathbb{F}_p$, and multiplication  determined by
\[
s_i s_j =\sum_{k=1}^n c_{ijk}s_k,
\]
where $s_1, s_2, \dots, s_n$ denotes a basis of $\mathbb{F}_{p^n}$ over $\mathbb{F}_p$. The concept of an explicitly given field of order $p^n$ is robust; indeed, Lenstra Jr.\  has shown in \cite[Theorem 1.2]{lenstra91.329} that for any two fields $A$ and $B$ of order $p^n$ given by two sets of structure constants $(a_{ijk})_{i,j,k=1}^n$ and $(b_{ijk})_{i,j,k=1}^n$ an isomorphism $A \longrightarrow B$ can be constructed in time polynomial in $n\log p$.

By an \textit{efficient isomorphism} between a black box field and an explicitly given finite field $\bF_{p^n}$, we mean an algorithm constructing such an isomorphism in time polynomial in $n$ and $\log p$.

One of the key results on black box fields belongs to Maurer and Raub  \cite{maurer07.427}; its statement and proof can be reformulated to yield the following result.

\begin{theorem} \label{th:bbfiekds}
Let\/ $\KK$  be a black box field of known characteristic $p$ encrypting an explicitly given finite field\/ $\mathbb{F}_{p^n}$ and $\KK_0$ the  prime subfield of\/ $\KK$. Then the isomorphism problem between $\KK$ and\/ $\mathbb{F}_{p^n}$ can be efficiently reduced to the isomorphism problem between $\KK_0$ and\/ $\mathbb{F}_p$. In particular,
\bi
\item  an efficient isomorphism $\KK_0 \longrightarrow \mathbb{F}_p$ can be extended in time polynomial in the input length\/ $l(\KK)$ to an efficient isomorphism
$\KK \longrightarrow \mathbb{F}_{p^n};$
\item there exists an isomorphism  $\mathbb{F}_{p^n} \longrightarrow \KK$ computable in polynomial in $l(\KK)$  time.
    \ei
\end{theorem}
The existence of an efficient isomorphism $\KK_0 \longrightarrow \bF_p $ would follow from solution of the discrete logarithm problem in $\KK_0$. In particular, this means that, for small primes $p$, every black box field of order $p^n$ is effectively isomorphic to $\bF_{p^n}$.

\section{Morphisms and protomorphisms} \label{sec:moraproto}

This section contains crucial tools for our algorithms. They are based on a simple observation that a map \begin{diagram}
X & \rDotsto^{\phi} & Y
\end{diagram} from a group to a group is a homomorphism of groups if and only if its graph
 \[
F = \{(x,\phi(x)): x\in X\}
\]
is a subgroup of $X \times Y$. Essentially we treat homomorphisms of black box groups as black box groups on their own, see Section \ref{sec:morphisms:bbg}. With this principle, almost everything in this section is self-evident.

First, we introduce some terminology.

\subsection{Morphisms} \label{sec:morphisms}

Given two  black boxes $\XX$ and $\YY$ encrypting  finite groups $X$ and  $Y$, respectively, we say that a map $\bxg{\phi}$ which assigns strings produced by $\XX$  to strings produced by $\YY$  is a \emph{morphism} of black box groups, if
\bi
\item the map $\bxg{\phi}$ is computable in probabilistic time polynomial in $l(\XX)$ and $l(\YY)$, and
\item there is a homomorphism $\phi:X \to Y$ such that the following diagram  is commutative:
\begin{diagram}
\XX &\rTo^{\bxg{\phi}} &\YY\\
\dDotsto_{\pi_{\XX}} & &\dDotsto_{\pi_{\YY}}\\
X &\rTo^{\phi} & Y
\end{diagram}
where $\pi_\XX$ and $\pi_\YY$ are the canonical projections of $\XX$ and $\YY$ onto $X$ and $Y$, respectively.
\ei

We shall say in this situation that a morphism $\bxg{\phi}$ \emph{encrypts} the homomorphism $\phi$. For example, morphisms arise naturally when a black box group $\XX$ is given by a generating set and we replace a generating set for the black box group $\XX$ by a more convenient one and run the product replacement algorithm for the new generating set; in fact, we replace a black box for $\XX$  and deal with a morphism $\YY \longrightarrow \XX$ from a new black box $\YY$ into $\XX$.

We apply to morphisms and abstract homomorphisms the same notational convention as to the strings and elements, using the same letters in sans serifed
or plain version, respectively.

Since different strings produced by $\YY$ may represent the same element in $Y$, replacing strings $\bxg{\phi}(\bx{x})$ by equivalent strings produces a new morphism $\bxg{\phi'}$ which also encrypts $\phi$ and, for all the practical purpose is the same as $\bxg{\phi}$. 

Slightly abusing terminology, we say that a morphism $\bxg{\phi}$ is an injection, or a surjection, etc., if  $\phi$ has these properties. In accordance with standard conventions, hooked arrows $\hookrightarrow$ stand for injections; dotted arrows are reserved for homomorphisms, including natural projections
\begin{diagram}
\XX & \rDotsto^{\pi_\XX} & X;
\end{diagram}
which are not necessarily morphisms, since, by the very nature of black box problems, we are not given an efficient procedure for constructing the projection of a black box onto the group it encrypts.

\subsection{Black box subgroups and further remarks on terminology} \label{sec:blackboxsubgroups}

If we have an injection
\begin{diagram}
\YY &\rInto^{\bxg{\phi}} &\XX\\
\dDotsto_{\pi_{\YY}} & &\dDotsto_{\pi_{\XX}}\\
Y &\rInto^{\phi} & X
\end{diagram}
we say that $\YY$ is a \emph{black box subgroup} of $\XX$ encrypting $Y\leq X$. We emphasize that a black box subgroup is a procedure and has to be treated as such especially when one writes a computer code for black box group algorithm. Notice that different black box subgroups may encrypt the same subgroup. Indeed, an element in $Y$ can be encrypted by several different strings produced  from $\XX$; it is important to take into consideration a possibility that not all of these strings are produced by $\YY$.

If elements $\pi_\YY(\bx{y}_1), \dots, \pi_\YY(\bx{y}_k)$ generate $Y$, we call strings $\bx{y}_1, \dots, \bx{y}_k$ \emph{generators} of $\YY$.

Black box subgroups will be constructed in this paper in one of the following ways:

\bi
\item We pick some strings $\bx{y}_1,\dots,\bx{y}_m$ produced by the black box group $\XX$ and we treat them as generators of a black box subgroup $\YY$. We use the product replacement algorithm \cite{celler95.4931} for random sampling.
\item Given black box subgroups $\YY_1,\dots,\YY_k$ in $\XX$, we generate a subgroup $$\YY = \langle \YY_1,\dots,\YY_k\rangle$$ by taking generating sets in $\YY_i$ and combining them into a generating set in $\YY$.
\item If $\YY$ is the centralizer in $\XX$ of an involution or a proto-involution in the sense of Section~\ref{sec:proto-involution} then we apply the procedure described in Section \ref{sec:centralizerofprotoinvolution} to ``populate'' $\YY$ and eventually find a generating set for $\YY$.
\ei

\paragraph{\bf Terminology and conventions.}  Abusing terminology and notation, we write $\bx{x}\in \XX$ for a string $\bx{x}$ produced by the black box group $\XX$ and we say that $\bx{x}$ is an \emph{element} of $\XX$. In the rest of the paper, a subgroup of a black box group is meant to refer to a black box subgroup. The \emph{order} $o(\bx{x})$ ($= |\bx{x}|$) of $\bx{x}$ is $o(x)=|x|$. We shall refer to a string $\bx{x} \in \XX$ as an \emph{involution}, or a semisimple element, or a unipotent element, etc.,  if $x \in X$ is an element with these properties. If $\bx{x} \in \XX$ is an involution, then $\CC_\XX(\bx{x})$ denotes a black box subgroup encrypting $C_X(x)$, see Section \ref{sec:centralizerofprotoinvolution} for a construction of $\CC_\XX(\bx{x})$. Moreover, we refer to black box subgroups in $\XX$ as to tori, unipotent groups, etc., if they encrypt subgroups in $X$ with these properties.

\subsection{Direct and semidirect products of black box groups}\label{subsec:direct}

Assume that $\XX$ encrypts $X$ and $\YY$ encrypts $Y$. Then the black box  $\XX\times \YY$ produces pairs of strings $(\bx{x},\bx{y})$ by sampling $\XX$ and $\YY$ independently, with operations carried out componentwise in $\XX$ and $\YY$; of course, $\XX\times \YY$ encrypts $X\times Y$ and $l(\XX \times \YY)=l(\XX)+l(\YY)$.

More generally, given black box groups $\XX_1,\dots, \XX_n$, we can define their direct product
\[
\XX = \XX_1\times\cdots\times\XX_n
\]
in an expected way, consecutively sampling strings $\bx{x}_i \in \XX_i$ to form a random $n$-tuple $(\bx{x}_1,\dots,\bx{x}_n)$ and carrying out group operations on these $n$-tuples componentwise.

Later in the paper, we will use semidirect products of black box groups. They arise in a situation when we have two black box group $\XX$ and $\YY$ and a polynomial time in $\l(\XX)$ and $l(\YY)$ procedure for the action of $\YY$ on $\XX$ by automorphisms,
\[
\XX \times \YY \longrightarrow  \XX, \qquad
(\bx{x},\bx{y})  \mapsto  \bx{x}^\bx{y};
\]
then $\XX \rtimes \YY$ samples independent pairs $(\bx{x},\bx{y})$ of strings from $\XX$ and $\YY$ with  multiplication performed and inversion  by the rules
\begin{equation}\label{eq:semi1}
(\bx{x}_1,\bx{y}_1) \circ (\bx{x}_2,\bx{y}_2) := (\bx{x}_1\bx{x}_2^{\bx{y}_1} \!,\, \bx{y}_1\bx{y}_2)
\; \mbox{ and } \; (\bx{x},\bx{y})^{-1} := ((\bx{x}^{-1})^{\bx{y}^{-1}}, \bx{y}^{-1}).
\end{equation}

\subsection{Morphisms as black box groups} \label{sec:morphisms:bbg}

Given a morphism
\begin{diagram}
\XX &\rTo^{\bxg{\phi}} &\YY\\
\dDotsto_{\pi_{\XX}} & &\dDotsto_{\pi_{\YY}}\\
X &\rTo^{\phi} & Y
\end{diagram}
of black box groups, we can associate with it a black box subgroup  $\ZZ\hookrightarrow \XX\times \YY$ which encrypts the graph  $F=\{(x, \phi(x) : x \in X)\}$ of $\phi $. The black box group $\ZZ$ produces strings $\{(\bx{x}, \bxg{\phi}(\bx{x}))\}$ with $\bx{x}$ is sampled by the black box $\XX$
and the natural projection is defined as
\bea
\pi_\ZZ: \ZZ & \longrightarrow & F\\
(\bx{x},\bxg{\phi}(\bx{x})) & \mapsto & (\pi_\XX(\bx{x}), \phi(\pi_\XX(\bx{x})).
\eea

In practice, this means that we find strings $\bx{x}_1,\dots,\bx{x}_k$ generating $\XX$ with known images $\bx{y}_1=\bxg{\phi}(\bx{x}_1),\dots, \bx{y}_k=\bxg{\phi}(\bx{x}_k)$ in $\YY$ and then use the product replacement algorithm for the black box  subgroup \[\ZZ^* = \langle (\bx{x}_1,\bx{y}_1),\dots,(\bx{x}_k,\bx{y}_k)\rangle \hookrightarrow \XX \times \YY\] encrypting a subgraph $\{\, (\bx{x},\bxg{\phi}(\bx{x}))\,\}$ of the homomorphism $\bxg{\phi}$. Random sampling of the black box $\ZZ^*$ returns strings $\bx{x}\in \XX$ with their images $\bxg{\phi}(\bx{x})\in \YY$ already attached.

\subsection{Protomorphisms}

Let $\XX$ and $\YY$  be two black box groups encrypting $X$ and $Y$, respectively, and $\pi$ the canonical projection of $\XX\times \YY$ onto $X\times Y$. A \emph{protomorphism} $\ZZ$ between black box groups  $\XX$ and $\YY$ is a black box subgroup $\ZZ \hookrightarrow \XX \times \YY$ such that $\pi_\ZZ(\ZZ)$ is  the graph of a homomorphism from $X$ to $Y$ or from $Y$ to $X$ -- the direction of homomorphism is not set here.
We say that $\ZZ$ \emph{encrypts} this homomorphism.

Given a string $\bx{x}$ in a black box group $\XX$ encrypting a group $X$, it is frequently useful to associate with $\bx{x}$ a black box for the graph of a specific automorphism of $X$, namely, the conjugation by $\pi_\XX(\bx{x})$. It can be  viewed as a black box subgroup $\CC_\bx{x} \hookrightarrow \XX\times \XX$, which produces strings $(\bx{y},\bx{y}^\bx{x})$ for random strings $\bx{y} \in \XX$, with group operations and equality relation defined in the obvious way.

Treating a homomorphism $\XX \longrightarrow \YY$  of black box groups $\XX$ and $\YY$  as a black box subgroup in their direct product $\XX \times \YY$ allows us to construct previously inaccessible objects -- see, for example, ``reification of involutions'',  Section~\ref{sec:reification}.

\subsection{Amalgamation of local proto-automorphisms} \label{sec:proto-involution}

Let $\XX$  be a black box group encrypting a group $X$. Expanding the terminology from the previous section, a proto-automorphism $\FF$ on  $\XX$  is a black box subgroup $\FF \hookrightarrow \XX\times \XX$ for the graph of an automorphism of $\XX$.

Assume that  black box subgroups $\YY_1,\dots,\YY_k$ of $\XX$ are encrypting, respectively, subgroups $Y_1,\dots,Y_k$ of $X$, and assume that $\langle Y_1,\dots, Y_k \rangle = X$. Assume that $\phi_1,\dots, \phi_k$ are  automorphisms of subgroups $Y_1,\dots,Y_k$, respectively, and $\FF_i$ are proto-automorphisms on $\YY_i$ encrypting $\phi_i$, $i=1,\dots,k$. We say that the system of proto-automorphisms $\FF_1,\dots,\FF_k$ is \emph{consistent} if there exists a unique automorphism $\phi$ of $X$ such that $\phi_i = \phi\mid_{Y_i}$ for all $i=1,\dots,k$.

\begin{theorem}[Amalgamation of local proto-automorphisms] \label{tH:algamation-of-local-involutions}
If\/ $\FF_1,\dots,\FF_k$ is a consistent system of proto-automorphisms on black box subgroups in $\XX$, then $$\FF =\langle \FF_1,\dots,\FF_k\rangle$$ is a proto-automorphism on $\XX$.
\end{theorem}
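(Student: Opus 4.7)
The plan is to identify $\FF$ with a black-box subgroup of $\XX \times \XX$ and then verify that its canonical projection to $G \times G$ is exactly the graph
\[
\Gamma_\phi := \{(g, \phi(g)) : g \in G\}
\]
of the consistent global automorphism $\phi$. Since $\phi$ is involutive, this identification immediately certifies $\FF$ as a proto-involution on $\XX$. So the real content is to pin down $\pi(\FF)$.

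First I would note that each $\FF_i$, being contained in $\YY_i \times \YY_i$ and hence in $\XX \times \XX$ via the given embeddings, is a black-box subgroup of $\XX \times \XX$; the amalgamated subgroup $\FF = \langle \FF_1, \ldots, \FF_k \rangle$ is then built by the standard generator-pooling construction for black-box subgroups described in Section~\ref{sec:morphisms}. Its canonical projection $\pi(\FF) \subseteq G \times G$ contains each $\pi(\FF_i) = \Gamma_{\phi_i} := \{(h,\phi_i(h)) : h \in H_i\}$. By consistency, $\phi_i = \phi|_{H_i}$, so $\Gamma_{\phi_i} \subseteq \Gamma_\phi$, and $\Gamma_\phi$ is a subgroup of $G \times G$ because $\phi$ is a homomorphism; hence $\pi(\FF) \subseteq \Gamma_\phi$. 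Conversely, since $\langle H_1, \ldots, H_k \rangle = G$, an arbitrary $g \in G$ can be written as a word $w(h^{(1)}, \ldots, h^{(N)})$ in elements drawn from the various $H_i$, and then $(g, \phi(g)) = w\bigl((h^{(1)}, \phi(h^{(1)})), \ldots, (h^{(N)}, \phi(h^{(N)}))\bigr)$ lies in $\langle \bigcup_i \Gamma_{\phi_i} \rangle = \pi(\FF)$. Thus $\pi(\FF) = \Gamma_\phi$.

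Finally, I would observe that $\phi^2 = 1$ follows automatically from the hypotheses, since $\phi^2$ fixes each $H_i$ pointwise and the $H_i$ generate $G$; so $\Gamma_\phi$ is the graph of an involutive automorphism of $G$, and $\FF$ encrypts it in the sense required by Section~\ref{sec:proto-involution}. The only potentially delicate point is the \emph{black-box} aspect: one must check that $\FF$ inherits polynomial-time sampling, multiplication, and equality testing in terms of $l(\XX)$. This follows from the corresponding properties of the $\FF_i$ together with the standard direct-product construction $\XX \times \XX$, so no genuine obstacle arises. The argument is essentially formal once the consistency condition is unpacked; the only subtlety worth flagging is that consistency is used in exactly one place, namely to ensure that the subgroup $\pi(\FF)$ generated by the \emph{local} graphs is contained in (and hence equal to) the \emph{global} graph $\Gamma_\phi$.
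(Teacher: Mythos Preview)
Your argument is correct and is exactly the unpacking the paper has in mind: the paper's own proof consists of the single sentence ``The proof is self-evident,'' and what you have written is precisely the routine verification that $\pi(\langle \FF_1,\dots,\FF_k\rangle)=\langle \Gamma_{\phi_1},\dots,\Gamma_{\phi_k}\rangle=\Gamma_\phi$ together with the observation that the black-box operations are inherited. There is nothing to add.
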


\begin{proof}
The proof is self-evident.
\end{proof}

We call $\FF$ the \emph{amalgam} of proto-automorphisms $\FF_1,\dots,\FF_k$.

\begin{theorem}[Augmentation of a black box group by a proto-involution] \label{th:augmentation-by-involution} If\/ $$\AA \hookrightarrow \XX\times\XX$$ is a proto-automorphism on $\XX$ encrypting an involutive automorphism $\alpha$ on $X$, we can construct an involutive automorphism $\bxg{\alpha}$ of $\AA$ by setting
\[
\bxg{\alpha}: (\bx{x},\bx{x}') \mapsto (\bx{x}',\bx{x}) \mbox{ for } (\bx{x},\bx{x}') \in \AA.
\]
Then the semidirect product $\AA \rtimes \{1, \bxg{\alpha}\}$ is a black box encrypting $X\rtimes \langle \alpha \rangle$, with $\AA$ canonically projecting onto $X$ and $\bxg{\alpha}$ projecting to $\alpha$.
\end{theorem}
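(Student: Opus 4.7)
My plan is a short bookkeeping argument in three steps, following the same pattern as the proof of Theorem~\ref{tH:algamation-of-local-involutions}.

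First, I would verify that the swap $\alpha \colon (x,x') \mapsto (x',x)$ is a well-defined involutive automorphism of $\FF$. Well-definedness is forced by $\phi^2 = \mathrm{id}$: if $(x,x')$ projects to $(g, \phi(g))$ in the graph $F$ of $\phi$, then setting $h = \phi(g)$ gives $\phi(h) = g$, so $(x',x)$ projects to $(h, \phi(h)) \in F$ and therefore lies in $\FF$. Involutivity of $\alpha$ is obvious, and the identity
\[
(x_1, x_1')(x_2, x_2') = (x_1 x_2,\, x_1' x_2') \;\stackrel{\alpha}{\longmapsto}\; (x_1' x_2',\, x_1 x_2) = (x_1', x_1)(x_2', x_2)
\]
shows that $\alpha$ is a group automorphism of $\FF$. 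Computationally $\alpha$ just exchanges two pointers, so it runs in time $O(l(\FF))$, in particular polynomial in $l(\XX)$.

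Second, I would form $\FF \rtimes \{1, \alpha\}$ using the black box semidirect product construction from the previous subsection. The action $(f, \alpha^\varepsilon) \mapsto \alpha^\varepsilon(f)$ is polynomial time by the first step, so the axioms BB1--BB3 transfer from $\FF$ (and the trivial two-element black box $\{1, \alpha\}$) to $\FF \rtimes \{1, \alpha\}$ through the multiplication rule $(f_1, y_1) \circ (f_2, y_2) := (f_1 f_2^{y_1^{-1}},\, y_1 y_2)$ introduced earlier in this section.

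Third, I would identify the abstract group encrypted by this semidirect product. The natural candidate for the canonical projection is
\[
\Pi \colon \FF \rtimes \{1, \alpha\} \longrightarrow G \rtimes \langle \phi \rangle, \qquad \bigl((x, x'),\, \alpha^\varepsilon\bigr) \longmapsto \bigl(\pi_\XX(x),\, \phi^\varepsilon\bigr).
\]
The first component is well defined because $\FF$ canonically projects onto $G$ via $(x, x') \mapsto \pi_\XX(x)$; the second is tautological. That $\Pi$ is a group homomorphism reduces to the single compatibility $\pi_\XX(\alpha(x, x')) = \pi_\XX(x') = \phi(\pi_\XX(x))$, which is precisely what is needed for the twisted multiplication in $\FF \rtimes \{1, \alpha\}$ to project onto the twisted multiplication in $G \rtimes \langle \phi \rangle$. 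The only point requiring care --- and the spot I would guard most carefully against sign errors --- is this last bookkeeping check, where one must match the twist convention used in the black box semidirect product formula with the action of $\phi$ on $G$, so that the encrypted group is $G \rtimes \langle \phi \rangle$ and not its opposite. Beyond this, the statement is essentially definitional, consistent with the self-evident flavour of Theorem~\ref{tH:algamation-of-local-involutions}.
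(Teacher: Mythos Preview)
Your proposal is correct and matches the paper's approach: the paper's own proof reads simply ``The proof is self-evident,'' and what you have written is precisely the definitional unpacking that makes that self-evidence explicit. Your three steps---checking that the coordinate swap $\alpha$ is a well-defined involutive automorphism of $\FF$, forming the black box semidirect product, and verifying that the obvious projection to $G \rtimes \langle \phi \rangle$ is a homomorphism---are exactly the bookkeeping one would do if asked to spell out the proof.
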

\begin{proof}
The proof is self-evident.
\end{proof}

Theorems~\ref{tH:algamation-of-local-involutions} and \ref{th:augmentation-by-involution} provide the conceptional frame for a construction of a
black group encrypting $\so_3(\bF) \simeq \pgl_2(\bF)$ from a black box group encrypting $\psl_2(\bF)$, see Theorem~\ref{psl2-pgl2}.

\subsection{Centralizer of a proto-involution}\label{sec:centralizerofprotoinvolution}

Let $\AA \hookrightarrow \XX\times\XX$ be a proto-involution on $\XX$ defined in Section \ref{sec:proto-involution}. Assume that $\AA$ encrypts some involutive automorphism $\alpha \in {\rm Aut}(X)$ of $X$. Later in the paper, we work in the situation when $\alpha$ is an external automorphism and $\bxg{\alpha}$ is not present as a string in the black box group. In this case, we do not have any access to $\bxg{\alpha}$ but we can make use of its graph.
Given $\AA$, we shall construct a black box group, which we denote it as $\CC_\XX[\AA]$, encrypting $C_X(\alpha)$.  Let $A=\{(x,x^\alpha)\in X\times X \mid x\in X\}$ be the graph of $\alpha$.  Obviously, by writing formally
\[
C_X[A]=\{ x\in X \mid \mbox{ there exists } y \in X \mbox{ such that } (x,y) \in A \mbox{ and }  x=y \},
\]
we get $C_X(\alpha)=C_X[A]$. We will construct a black box subgroup $\CC_\XX[\AA] \hookrightarrow \XX$ which satisfies a similar condition
\[
\CC_\XX[\AA]=\{\bx{x}\in \XX \mid \mbox{ there exists } \bx{y} \in \XX \mbox{ such that } (\bx{x},\bx{y}) \in \AA \mbox{ and }  \pi_\XX(\bx{x})=\pi_\XX(\bx{y}) \},
\]
and encrypts $C_X[A]$.

It follows from the arguments in \cite{borovik02.7, bray00.241} that we have the map $\zeta = \zeta_0\sqcup \zeta_1$:
\begin{eqnarray*}
\zeta:  X & \longrightarrow &  C_X(\alpha)\\
 x & \mapsto & \left\{ \begin{array}{ll}
\zeta_0(x) = {\rm i}(x^{\alpha}x^{-1})  &  \hbox{ if } o(x^{\alpha}x^{-1}) \hbox{ is even}\\
\zeta_1(x) = \sqrt{x^\alpha x^{-1}}\cdot x & \hbox{ if } o(x^\alpha x^{-1}) \hbox{ is odd,}
\end{array}\right.
\end{eqnarray*}
where ${\rm i}(x)$ is the unique involution and $\sqrt{x}$ is the unique square root of $x$ in the cyclic group $\langle x \rangle$ of odd order.

Notice that the map above can be written in the following way using the graph $A$ of $\alpha$ instead of $\alpha$.
\begin{eqnarray*}
\zeta:  A & \longrightarrow &  C_X[A]=C_X(\alpha)\\
 (x,y) & \mapsto & \left\{ \begin{array}{ll}
\zeta_0((x,y)) = {\rm i}(yx^{-1})  &  \hbox{ if } o(yx^{-1}) \hbox{ is even}\\
\zeta_1((x,y)) = \sqrt{yx^{-1}}\cdot x & \hbox{ if } o(yx^{-1}) \hbox{ is odd.}
\end{array}\right.
\end{eqnarray*}

Assume that $\XX$ satisfies Axiom BB4 and has a global exponent $E=2^km$ with $m$ odd.

If $\bx{x}\in\XX$ is a string encrypting an element of even order then the last non-identity string in the sequence
$$1 \neq \bx{x}^{m}, \, (\bx{x}^m)^2, \, (\bx{x}^m)^{2^2}, \dots, \left(\bx{x}^m\right)^{2^{k-1}}, \left(\bx{x}^{m}\right)^{2^k}=1$$
is an involution and denoted by ${\rm i}(\bx{x})$.

If  $\bx{x}\in\XX$ encrypts an element of odd order then $\bx{y} := \bx{x}^{(m+1)/2}$ obviously satisfies $\bx{y}^2=\bx{x}$ and is a square root of $\bx{x}$ in $\langle \bx{x}\rangle$; we denote $\bx{y} := \sqrt{\bx{x}}$.

Hence, we have the analogous map for the black box groups.

\begin{eqnarray}
\bxg{\zeta}:  \AA & \longrightarrow &  \CC_{\XX}[\AA]\nonumber\\
 (\bx{x},\bx{y}) & \mapsto & \left\{ \begin{array}{ll}
\bxg{\zeta}_0((\bx{x},\bx{y})) = {\rm i}(\bx{y}\bx{x}^{-1})  &  \hbox{ if } o(\bx{y}\bx{x}^{-1}) \hbox{ is even}\\
\bxg{\zeta}_1((\bx{x},\bx{y})) = \sqrt{\bx{y}\bx{x}^{-1}}\cdot \bx{x} & \hbox{ if } o(\bx{y}\bx{x}^{-1}) \hbox{ is odd.}
\end{array}\right.
\label{eqn:zeta}
\end{eqnarray}

If $X$ is a simple group of Lie type then, as shown in \cite{parker10.885}, $\zeta_1(x)$ is defined for random $x\in X$ with probability  $O(1/n)$ where $n$ is the Lie rank of $X$. Furthermore, the same calculation as in \cite[Section 6]{borovik02.7} proves that elements $\pi_\XX(\bxg{\zeta}_1(\bx{x},\bx{y}))$, for $(\bx{x},\bx{y})\in \AA$, are uniformly distributed over $C_X(\alpha)$. Therefore $\bxg{\zeta}_1$ provides an efficient black box for $\CC_{\XX}[\AA]$.

If $\AA$ is a graph of an inner automorphism corresponding to an element $\bx{a} \in \XX$, we denote $\CC_{\XX}[\AA]$ by $\CC_\XX(\bx{a})$.

For the black box groups $\XX$ encrypting $\psl_2(\bF)$ or $\pgl_2(\bF)$, where $\bF$ is a finite field of odd characteristic, we can construct a generating set for $\CC_\XX(\bx{a})$ in the following way, if needed. We know that $\CC_\XX(\bx{a}) = \TT\rtimes \langle \bx{w}\rangle$, where $\TT$ is the maximal torus containing the involution $\bx{a}$ and $\bx{w}$ inverts $\TT$. Since the map $\bxg{\zeta}_1$ produces uniformly distributed elements in $\XX$, by \cite[I.8]{mitrinovic1996}, a set of size $O(\log \log |\bF|)$ consisting of random elements in $\CC$ contains a generator for $\TT$ with probability $>1/2$. Moreover, since the half of the elements in $\CC$ are involutions inverting $\TT$, we can construct a generating set of size $O(\log \log |\bF|)$ for $\CC_\XX(\bx{a})$.

The map $\bxg{\zeta}_0$ is useful when we are interested mostly in involutions in $\CC_{\XX}[\AA]$, as it happens, for example, in reification of involutions, see Section~\ref{sec:reification}.

\subsection{Reification of an involution} \label{sec:reification}

We approach the most fascinating part of the story: identification of an involution in $\XX$ from its description.
We shall call this procedure the \emph{reification of an involution}; it is intensively used in the present paper and in \cite{BY2017B}.

Following the notation from the previous subsection, assume that $\AA \hookrightarrow \XX\times\XX$ is a proto-involution on $\XX$ encrypting a specific inner automorphism of $X$,  conjugation by an involution $a\in X$. We want to find a string $\bx{a}$  in $\XX$ that encrypts $a$. Let $A < X \times X$ be the graph of conjugation by $a$. Obviously, $a \in Y = C_X[A]$, and, moreover, $a \in Z(Y)$. Denote by $\Omega(Y)$ the subgroup generated by all involutions in $Z(Y)$, then $a\in\Omega(Y)$. Even more so, a probabilistic algorithm described in Section~\ref{sec:centralizerofprotoinvolution} gives us a black box group $\YY = \CC_\XX[\AA]$ encrypting $Y$. Now we apply to the subgroup (unknown to us) $\Omega(Y)$ the algorithm proving the following lemma.

\begin{lemma}
\label{lm:locating-central-involution}
Assume that a black box group\/ $\YY$ encrypts a subgroup\/ $Y$ in a simple group $X$ of Lie type of odd characteristic and of Lie rank $n$. Assume also that we know a global exponent $E$ for $\YY$. Then there is a Las Vegas algorithm  which constructs a black box group $\WW \hookrightarrow \YY$ which encrypts an elementary abelian $2$-group $W \leq Y$ which contains $\Omega(Y)$. The algorithm works in probabilistic time polynomial in $l(\YY)$, $\log E$ {\rm (}the global exponent of $\YY${\rm )}, and $n$.
 \end{lemma}

\begin{proof}
Let $\bx{i}_1$ be an involution produced from a random element in $\YY$ by repeated square and multiply method. Then $\YY_1=\CC_\YY(\bx{i}_1)$ is a black box subgroup encrypting a subgroup $Y_1\leq Y$ containing $\Omega(Y)$. Now, let $\bx{i}_2$ be a string encrypting an involution different from $\bx{i}_1$  produced by $\YY_1$, then similarly $\YY_2=\CC_{\YY_1}(\bx{i}_2)$ is a black box subgroup encrypting a subgroup $Y_2\leq Y_1$ containing $\Omega(Y)$. Continuing in this way, we descend to a black box subgroup $\ZZ \hookrightarrow \YY$ encrypting an abelian group $Z$ containing $\Omega(Y)$. Now, by using the black box group $\ZZ$, we construct the involutions in $\ZZ$ as described in Section \ref{sec:centralizerofprotoinvolution}.  Finally, we use these involutions and bounds from \cite[pp. 192--193]{Pomerance2002} to construct a generating set in a black box subgroup $\WW\hookrightarrow \YY$ which encrypts an elementary abelian subgroup containing $\Omega(Y)$.

If $X$ is a simple group of Lie type of odd characteristic, then the length of chains of centralizers of involutions is bounded by a  polynomial in its Lie rank, giving a crude upper bound of $\log |X|$. Since elements of even order (hence involutions) in $X$ are abundant by \cite{isaacs95.139} and the number of involutions in $Z(Y)$ is bounded by a polynomial in the Lie rank of $X$, the process quickly produces a desired black box subgroup $\WW \hookrightarrow \YY$.
\end{proof}

Since $\bx{a}$ is a string which encrypts an involution in an elementary abelian group in $Z(Y)$, it belongs to an elementary abelian group $\WW$ constructed in Lemma \ref{lm:locating-central-involution}; after that, $\bx{a}$ can be identified by testing every possibility in $\WW$. These crude estimates show that the reification procedure works in probabilistic time polynomial in $l(\XX)$, $\log E$ (where $E$ is the global exponent of $\XX$) and $|W|$.

In this paper, reification of involutions is applied to $\so_3(\bF)$ in odd characteristic. In this case, centralizers of involutions at the subsequent stages of the algorithm are either dihedral or abelian, and the black box subgroup $\WW$ in the statement of Lemma \ref{lm:locating-central-involution} has order at most 4. Hence the procedure, in this case, is pretty fast.

\subsection{Involutions in $\psl_2(2^n)$}\label{sec:even-case}

Let $\XX$ be a black box group encrypting $\psl_2(2^n)$ for some $n \geqslant 2$. A paper by Kantor and Kassabov \cite{kantor15.16} contains a construction of an involution in $\XX$, a  result analogous to the results in this paper. We shall now show how involutions in $\psl_2(2^n)$  can be constructed by our methods.

Take two non-commuting elements $\bx{x}$ and $\bx{y}$  of odd order. It is a well-known property of $\sl_2(2^n)$ that either $\bx{x}$ and $\bx{y}$ belong to the same Borel subgroup in $\XX$ -- but in that case $[\bx{x},\bx{y}]$ is an involution, and we are done, or
there is an involution $\bx{a} \in \XX$ which inverts both $\bx{x}$ and $\bx{y}$. We do not know $\bx{a}$, but it is obvious that the
proto-involution $\AA < \XX \times \XX$ corresponding to $\bx{a}$ contains
the tuple
$$((\bx{x}\bx{y})^\bx{a}, \bx{x}\bx{y}) = (\bx{x}^\bx{a}\bx{y}^\bx{a}, \bx{x}\bx{y})  = (\bx{x}^{-1}\bx{y}^{-1},\bx{x}\bx{y}).$$
Now it follows from Equation~(\ref{eqn:zeta}) that
\begin{eqnarray*}
\bxg{\zeta}_1((\bx{x}^{-1}\bx{y}^{-1},\bx{x}\bx{y})) = \sqrt{\bx{x}\bx{y}^2\bx{x}}\cdot \bx{x}^{-1}\bx{y}^{-1}.
\end{eqnarray*}
It is easy to see that the calculation produces an involution in $\CC_{\XX}(\AA)$ unless  $\bx{x}\bx{y}^2\bx{x}$ is already an involution.

\section{Applications of reifications of involutions} \label{sec:application-reification}

\subsection{Construction of $\so_3(\bF)$ from $\psl_2(\bF)$}\label{sec:constructionofpgl}

It will become clear later in this paper that black box  groups $\pgl_2(\bF)\cong \so_3(\bF)$ are easier to analyze than $\sl_2(\bF)$ or $\psl_2(\bF)$ because they contain more involutions. We extend a black box group encrypting $\psl_2(\bF)$ to a black box group encrypting $\so_3(\bF)$ using amalgamation of proto-automorphisms, Theorem~\ref{tH:algamation-of-local-involutions}, and augmentation of a black box group by a proto-involution, Theorem~\ref{th:augmentation-by-involution}.

If $\bx{x}\in \XX$ is a non-trivial semisimple element, then we will denote the maximal torus in $\XX$ containing $\bx{x}$ by $\TT_{\bx{x}}$.

\begin{theorem}\label{psl2-pgl2} Let $\YY$ be a black box group encrypting a group $Y = \psl_2(\bF)$, where $\bF$ is an unknown finite field of unknown odd characteristic and $|\bF| \geq 7$. Assume that we know a global exponent $E$ for $\YY$. Then there is an algorithm which constructs a proto-involution $\YY^\circ \hookrightarrow \YY \times \YY$ on $\YY$ encrypting a diagonal automorphism $d$ of $Y$. Moreover, if we take  the automorphism \[ \bx{d}: (\bx{x},\bx{x}') \mapsto (\bx{x}',\bx{x}) \mbox{ for } (\bx{x},\bx{x}') \in \YY^\circ \] of $\YY^\circ$, then the semidirect product $\XX =\YY^\circ \rtimes \langle \bx{d}\rangle$ is a black box group encrypting $\pgl_2(\bF)$.

The running time of the algorithm is polynomial in $\log E$.
\end{theorem}

\begin{proof}
We recall  that $Y$  has one conjugacy class of external involutive diagonal automorphisms \cite[Table 4.5.1]{gorenstein1998}. Let $d$ be its representative, then $C_Y(d) = S \rtimes \langle w \rangle$ where $S$ is a torus in $Y$ of order $(|\bF|-1)/2$ or $(|\bF|+1)/2$ depending on $\fmone$ or $\fpone$, respectively, and $w$ is an involution inverting $S$. Observe that the order of the torus $S$ is odd. Take an involution $t\in C_Y(d)$ inverting $S$ and assume that $t$ is contained in some maximal torus $T$. By the Frattini argument, $Y\cdot N_{Y\langle d\rangle}(T) = Y\langle d \rangle$ and we can assume without loss of generality that $d$ normalizes $T$.

Notice that $\langle T, S\rangle = Y$ and $d$ centralizes $S$ and inverts every element in $T$. Therefore we can apply to $\YY$ amalgamation of proto-automorphisms and  augmentation by a proto-involution,  Theorem~\ref{tH:algamation-of-local-involutions} and Theorem~\ref{th:augmentation-by-involution}.

Construction of tori $\TT$ and $\SS$ in $\YY$ with these properties goes as follows. We first construct an involution $\bx{u}\in \YY$ and its centralizer $\CC_\YY(\bx{u})=\TT_\bx{u} \rtimes \langle \bx{w}\rangle$. Then, we find a random element $\bx{y} \in \YY$ such that the element $\bx{z}:=\bx{u}\bx{u}^\bx{y}$ has odd order and set $\SS:=\langle \bx{z}\rangle$. A black box for $\TT_\bx{u}$ can be set up (or a generating set for $\TT_\bx{u}$ can be found) by the arguments in Subsection \ref{sec:centralizerofprotoinvolution}. It follows from the well-known description of subgroups in $\psl_2(\bF)$ that the subgroups $\TT_\bx{u}$ and $\SS$  generate $\YY$.

Consider the amalgam of local proto-automorphisms
\bea
\alpha : \TT_\bx{u} \to \TT_\bx{u}, && \bx{t} \mapsto \bx{t}^{-1}\\
\beta : \SS \to \SS, && \bx{s} \mapsto \bx{s}
\eea
and let $\YY^\circ$ be the resulting involutive proto-automorphism of $\YY$. Note that the automorphism $\bx{d}: (\bx{x},\bx{x}') \mapsto (\bx{x}',\bx{x})$ of $\YY^\circ$ defined by the rule as in Theorem \ref{th:augmentation-by-involution},  so the black box group $\YY^\circ \rtimes \langle \bx{d} \rangle$ encrypts $Y\rtimes \langle d \rangle$. All we need to do now is to make sure that the automorphism $\bx{d}$ encrypts an external involutive automorphism of $Y$.

Observe that if the element $\bx{z}$ belongs to a maximal torus of odd order in $\YY$, then $\bx{d}$ encrypts an external involutive diagonal automorphism of $Y$. However, if the element $\bx{z}$ belongs to a maximal torus of even order in $\YY$, then $\bx{d}$ must be the involution in this torus since $\bx{d}$ centralizes $\SS$. Therefore, $\bx{d}$ encrypts some inner automorphism of $Y$ and we can construct the involution $\bx{d}$ in $\YY$ as a string by constructing the central involution in $\CC_\YY(\bx{d})$.  In this case, we reconstruct a random element $\bx{y} \in \YY$ and repeat our procedure as above. Note that the probability of finding an element $\bx{y} \in \YY$ such that the element $\bx{z}:=\bx{u}\bx{u}^\bx{y}$ belongs to a maximal torus of odd order is at least $1/2$.
\end{proof}

\subsection{Reification of involutions in $\so_3(\bF)$} 

Reification of proto-involutions, as described in Section~\ref{sec:reification}, is the most important procedure involved in our construction of unipotent elements in  $\so_3(\bF)$ and in the proof of Theorem~\ref{SL2-SO3}.

We list some well-known properties of the group $X \simeq \so_3(\bF) \simeq \pgl_2(\bF)$.
\begin{lemma}\label{lm:properties-pgl-2}
Let $X \simeq \so_3(\bF) \simeq \pgl_2(\bF)$ where $\bF$ is a field of odd characteristic. Then:
\bi
\item Every non-trivial element in $X$ is either semisimple or unipotent.
\item Every non-trivial semisimple element $z \in X$ belongs to a unique torus $Z < X$ and centralizes an involution $r \in Z$.
\item If $s$ and $t$ are distinct involutions in $X$ and $z = st$ is a semisimple element, the involution $r$ in the torus  $Z$ containing $z$ is the only involution in $X$ which commutes with both $s$ and $t$.
\ei
\end{lemma}

For two distinct involutions  $\bx{s}, \bx{t} \in \XX$,  we denote the only involution in $\XX$ that commutes with both $\bx{s}$ and $\bx{t}$, if such involution exists, as $\bx{s}\boxtimes \bx{t}$. If it does not exist, then, in view of Lemma~\ref{lm:properties-pgl-2}, the product $\bx{s}\bx{t}$ is a unipotent element.

\begin{theorem}\label{lm:reif}
Let\/ $\XX$ be a black box group encrypting $X=\so_3(\bF)$, where $\bF$ is an unknown finite field of unknown odd characteristic. Assume that $|\bF|\geq 7$ and we know a global exponent $E$ for $\XX$. Let $\bx{s},\bx{t}\in \XX$ be two distinct involutions such that $\bx{s}\bx{t}$ is not a unipotent element. Then there is a Las Vegas algorithm constructing the involution $\bx{j}$ commuting with $\bx{s}$ and $\bx{t}$.

The running time of the algorithm is polynomial in $\log E$.
\end{theorem}

We shall write $\bx{j} =\bx{s}\boxtimes \bx{t}$, treating $\boxtimes$ as a partial binary operation on the set of involutions and call it \emph{cross product}.

\begin{proof} We set $\bx{z}:=\bx{s}\bx{t}$. Note first that if $|\bx{z}|=2$, then $\bx{s}$ and $\bx{t}$ commute and $\bx{z}$ is an involution commuting with them. If $|\bx{z}|>2$, $C_X(\pi_\XX(\bx{z}))$ is a torus containing an involution $\pi_\XX(\jj)$ and it is inverted by $\pi_\XX(\bx{s})$ and $\pi_\XX(\bx{t})$. Clearly, $\bx{j}$ commutes with $\bx{s}$ and $\bx{t}$. By Lemma~\ref{lm:properties-pgl-2}, such an involution $\bx{j}$ is unique in $\XX$.

If the order of $\bx{z}$ is even, then $\bx{j}={\rm i}(\bx{z})$, see Section \ref{sec:centralizerofprotoinvolution}.

If $\bx{z}$ has odd order, then observe that $\bx{j}$ centralizes $\ZZ= \langle \bx{z}\rangle$ and inverts every element in the torus $\TT_\bx{s}$ containing $\bx{s}$; construction of $\TT_\bx{s}$ is similar to construction of tori in the proof of Theorem~\ref{psl2-pgl2}. Since the order of $\bx{z}$ is odd, we have $|\ZZ|\geq 3$ and so $\XX=\langle  \TT_\bx{s},  \ZZ \rangle$. Now the involution $\bx{j}$ can be found by amalgamating local proto-automorphisms
\bea
\bx{x} \mapsto & \bx{x}^{-1} & \mbox{on } \TT_\bx{s}\\
\bx{x} \mapsto & \bx{x} & \mbox{on } \ZZ
\eea
and reifying the result, see Section~\ref{sec:reification}. In $X = \so_3(\bF)$ where $\bF$ is a finite field of odd characteristic with $|\bF| \geq 7$, the last step can be run very efficiently due to the fact that  involutions $r \in X$ have the property that  $Z(C_X(r)) = \langle r \rangle$, see details in  Section~\ref{sec:reification}.
\end{proof}

\subsection{Writing an element in $\so_3(\bF)$ as a product of two involutions}
It is well-known that any element of order $>2$ in $\so_3(\bF)$, where $\bF$ is a finite field of odd characteristic, can be writen as a product of two involutions. The following lemma shows the same can be done in black box groups.

\begin{lemma} \label{lm:bireflectivity}
Let $\XX$ be a black box group encrypting $\so_3(\bF)$, where $\bF$ is a finite field of unknown odd characteristic and $|\bF|\geq 7$.  Then, with a given global exponent $E$ for $\XX$, we can represent an arbitrary element  $\bx{x}\in\XX$ of order $|\bx{x}| >2$ as a product of two involutions $\bx{r}$ and $\bx{s}$ from $\XX$ in time polynomial in $\log E$. In particular, this yields an involution $\bx{r}$ inverting $\bx{x}$. This algorithm is Las Vegas.
\end{lemma}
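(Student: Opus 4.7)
The key reduction is that $x = ij$ with $i, j$ involutions if and only if there exists an involution $w \in \XX$ inverting $x$ (i.e., $wxw^{-1} = x^{-1}$): given such $w$, set $j := wx$ and check $(wx)^2 = w(xw)x = w(wx^{-1})x = 1$, so $j$ is an involution and $x = w\cdot j$. Thus the task throughout is to produce an inverting involution $w$ in probabilistic time polynomial in $\log E$, which I split by the parity of $|x|$.

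\textbf{Case A ($|x|$ even).} Extract $s := \ii(x)$, the unique involution in $\langle x\rangle$, by repeated squaring against the odd part of $E$ (Section~\ref{sec:centralizerofprotoinvolution}). The centralizer $\CC := C_\XX(s)$, built via the proto-involution machinery, is dihedral, $\CC = \TT_s \rtimes \langle w_0\rangle$, with $x \in \TT_s$. Every element of the non-torus coset $\TT_s w_0$ is an involution (as $(tw_0)^2 = t t^{-1} = 1$) inverting $\TT_s$ and hence $x$; a uniform random $g \in \CC$ lies in this coset with probability $\tfrac12$, detected by $g^2 = 1$ and $g \ne s$, and one takes $w := g$.

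\textbf{Case B ($|x|$ odd).} Here $\langle x\rangle$ contains no involution, so I first locate the unique involution $s_0 \in \TT_x$ in the maximal torus containing $x$ (equivalently, the regular point in $\Pp$ corresponding to $\TT_x$), and then apply Case~A's recipe inside $C_\XX(s_0) = N_\XX(\TT_x)$. The construction of $s_0$ uses the projective-plane geometry of Sections~\ref{sec:geometryofinvolutions}--\ref{sec:projective-plane}. Pick a random involution $r \in \XX$ and form $r' := xrx^{-1}$, a distinct conjugate. The perpendicular bisector $L_r$ of the axes of $r, r'$ in $\Pp$ passes through $s_0$, since $s_0$ is the rotation centre sending $r$ to $r'$ and therefore equidistant from them. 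I recover $L_r$ as the join of two of its points: first, $\jj(r, r')$, the unique involution commuting with both $r$ and $r'$, constructed via Lemma~\ref{lm:reif} (provided $rr'$ is non-unipotent, a re-samplable event of failure probability $O(1/|\GF|)$); this is the pole of the line $r \vee r'$ and therefore lies on every perpendicular to it, in particular on $L_r$. Second, an involution $\beta$ produced by Lemma~\ref{lm:bisection} applied to $(r, r')$, whose axis lies on $L_r$ because it is equidistant from $r, r'$. Since $\beta$ conjugates $r$ to $r'$ whereas $\jj(r, r')$ commutes with both, the two points are distinct, so $L_r = \beta \vee \jj(r, r')$ is computed via the line-join operation of $\Pp$.

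Repeating the construction with an independent random $r_* \in \Ii$ produces a second perpendicular-bisector line $L_{r_*}$, generically distinct from $L_r$ but likewise passing through $s_0$; thus the intersection $L_r \wedge L_{r_*}$, computed by the line-intersection routine of Section~\ref{sec:projective-plane}, equals $\{s_0\}$, extracting $s_0$ as a string in $\XX$. Applying Case~A to $s_0$ then yields an involution $w$ in $C_\XX(s_0) \setminus \TT_x$ inverting $x$, completing the construction. The unipotent subcase ($|x| = p$) runs in parallel, with the parabolic point $\UU = C_\XX(x)$ of $\Pp$ replacing $s_0$ and the Borel $N_\XX(\UU)$ replacing $N_\XX(\TT_x)$. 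The main obstacle I anticipate is a uniform probabilistic analysis of Case~B, bounding the failure probabilities for edge cases (coincident lines $L_r = L_{r_*}$, unipotent products $rr'$, and $r$ happening to commute with or invert $x$) so that the overall algorithm remains polynomial in $\log E$ rather than in $|\GF|$.
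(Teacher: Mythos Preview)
Your Case A is fine. Case B, however, contains a genuine gap, and in any case the paper's argument is far shorter.

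\textbf{The gap.} You assume that the ``perpendicular bisector'' $L_r$ of $r$ and $r'=xrx^{-1}$ is a single line containing $s_0$, $\jj(r,r')$, and the bisecting involution $\beta$ from Lemma~\ref{lm:bisection}. In the finite projective-metric plane this fails: the equidistant locus of two regular points is a \emph{union of two} lines. Concretely, there are exactly two involutions $\beta_1,\beta_2$ on $r\vee r'$ swapping $r$ and $r'$ (with axis vectors $\rho+x\rho$ and $\rho-x\rho$ if $\rho$ is an axis for $r$); they satisfy $\beta_1\beta_2=\jj(r,r')$, so $\{\beta_1,\beta_2,\jj(r,r')\}$ is a self-polar triangle and the two candidate lines $\beta_i\vee\jj(r,r')$ are distinct. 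Taking $s_0$ along $e_3$ and $\rho=(a,0,c)$ one checks $(e_3,\rho-x\rho)=0$ but $(e_3,\rho+x\rho)=2c\ne 0$, so $s_0$ lies on exactly one of the two lines. Since Lemma~\ref{lm:bisection} does not control which of $\beta_1,\beta_2$ it returns (in the even branch Tonelli--Shanks may give either square root of $rr'$, and the two roots differ exactly by $\jj(r,r')$, yielding $\beta_1$ versus $\beta_2$), your $L_r$ may miss $s_0$ entirely, and then $L_r\wedge L_{r_*}$ is not $s_0$. The defect is repairable---try both midpoints and test the resulting intersection candidates against $[\,\cdot\,,x]=1$---but the proof as written is incomplete.

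\textbf{The paper's route.} The paper bypasses the parity split and all plane geometry: pick a random semisimple $y\in\XX$, amalgamate the two local proto-involutions ``inversion on $\langle x\rangle$'' and ``inversion on $\langle y\rangle$'' (which together generate $\XX$ generically), and reify the resulting proto-involution to obtain the unique involution $r$ inverting both $x$ and $y$, exactly as in the line-intersection routine of Section~\ref{sec:projective-plane}. Then $x=r\cdot(rx)$. This is one reification, whereas your Case B invokes $\jj$, bisection, two line-joins and a line-intersection---each itself a reification---to reach the same end.
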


\begin{proof}
We take a random element $\bx{y}\in \XX$. By \cite{guralnick94.1395}, the probability that $\XX$ is generated by $\bx{x}$ and $\bx{y}$ is at least $1-\frac{1}{2|\bF|^2+2}$. Now, we reify the involution $\bx{r}$ that inverts $\bx{x}$ and $\bx{y}$. If we end up with a failure or a serendipitous discovery of a unipotent element, we need to repeat reification with other choice of $\bx{y}$. When we have the involution $\bx{r}$, we can decompose
$
\bx{x} = \bx{r} \cdot \bx{r}\bx{x},
$
where both $\bx{r}$ and $\bx{s} = \bx{r}\bx{x}$ are involutions.
\end{proof}

\subsection{Unipotency test}

The following lemma makes the algorithm in Theorem \ref{cor:unipotent} Las Vegas.

\begin{lemma}\label{lem:test:uni}
Assume that\/ $\XX$ encrypts $\so_3(\bF)$ where $\bF$ is an unknown finite field of unknown odd characteristic with $|\bF|\geq 7$. Let $\bx{u} \in \XX$  of order bigger than $2$ and $\bx{r}$ an involution inverting $\bx{u}$ -- the latter can be found by Lemma {\rm \ref{lm:bireflectivity}}.
\bi
\item[{\rm (a)}]  Take a random element \/ $\bx{1} \neq \bx{t}\in \CC_\XX(\bx{r})$ with $|\bx{t}|\geq 3$. Then $\bx{u}$ is unipotent if and only if
$\bx{u}\neq \bx{u}^\bx{t}$ and $[\bx{u},\bx{u}^\bx{t}]=\bx{1}$.
\item[{\rm (b)}] Assume that\/ $\bx{u}$ is unipotent. Then $\UU = \langle\bx{u}^{\TT\bx{r}}\rangle$ is the maximal unipotent subgroup in $\XX$ containing $\bx{u}$, $\bx{s}$ inverts $\UU$,  and  $\BB = \UU\TT_\bx{r}$ is the Borel subgroup containing $\UU$.
\ei
The algorithms in this Lemma run in probabilistic time polynomial in $\log E$.
\end{lemma}

\begin{proof} To prove (a), observe that since  $|\bx{t}|\geq 3$, $\bx{t}$ belongs to the torus $\TT_\bx{r}$. Note that the elements of $\XX$ are either semisimple or unipotent. If $\bx{u}$ is semisimple, then $\CC_\XX(\bx{u})$ is the torus $\TT_\bx{u}$. Since $[\bx{u},\bx{u}^\bx{t}]=1$ and $\bx{u} \in \TT_\bx{u}$, we have $\bx{u}^\bx{t} \in \TT_\bx{u}$. Hence $\bx{t} \in \NN_\XX(\TT_\bx{u}) = \TT_\bx{u} \rtimes \langle \bx{w} \rangle$ for some involution $\bx{w}$ inverting $\TT_\bx{u}$. Since $|\bx{t}|\geq 3$, we conclude that $\bx{t} \in \TT_\bx{u}$. This is a contradiction to the assumption $\bx{u}\neq \bx{u}^\bx{t}$. Hence $\bx{u}$ is a unipotent element in $\XX$.

If $\bx{u}$ is unipotent, then part (b) immediately follows from the structural facts about the groups $\so_3(\bF)$ in odd characteristic and also provides the reverse implication in (a).

By the arguments in Section \ref{sec:centralizerofprotoinvolution}, the black box for the subgroup $\TT_\bx{r}$ allows us to construct random conjugates of $\bx{u}$ by the random elements of $\TT_\bx{r}$. Hence we can construct random elements from $\UU$ and $\BB$.
\end{proof}

\subsection{Bisection of angles}\label{sec:bisection}

Note that bisection of angles is the extraction of square roots in the group of rotations.  In our setting, Axiom BB4 allows us to find square roots of elements in cyclic black box subgroups by virtue of the  Tonelli-Shanks algorithm \cite{shanks73.51,tonelli1891.344}.  Usually the  Tonelli-Shanks algorithm is formulated only for multiplicative groups of finite fields and we include its more general formulation for completeness of exposition.

\begin{lemma}[The Tonelli-Shanks Algorithm] \label{lm:Tonelli-Shanks}
Let $\TT$ be a cyclic black box group of known global exponent $E$. Let $\bx{z}$ be an element in $\TT$ that has a square root in $\TT$. Then an element $\bx{t}\in\TT$ such that $\bx{t}^2=\bx{z}$ can be found in probabilistic polynomial time in $\log E$.
\end{lemma}

\begin{proof}
We set $E=2^mn$ where $(2,n)=1$. Given $\bx{t}\in \TT$, we shall say that $l$ is the $2$-height of $\bx{t}$, if $|\bx{t}^n| = 2^l$; notice that this is equivalent to $2^l$ being the largest power of $2$ that divides the order $|\bx{t}|$ of $\bx{t}$.

We first construct an element $\bx{x}\in \TT$ with maximal 2-height $l$, that is, the order $|\bx{x}|$ is divisible by the maximum power of 2 dividing the order of $\TT$. To do this, we construct a constant number of random elements in $\TT$ and take the element with the biggest 2-height. Note that since $\TT$ is cyclic, at least half of the elements of $\TT$ have orders with maximal 2-height. If a chosen element does not have the biggest 2-height, then the procedure below fails, and we start our procedure by constructing an element with bigger 2-height. Let $\bx{x}\in \TT$ be an element with maximal 2-height $l$, that is, the order $\bx{x}$ is divisible by the maximum power of 2 dividing the order of $\TT$. If $l\neq 0$, then $\bx{x}$ can not be a square in $\TT$, namely, there are no elements $\bx{y} \in \TT$ such that $\bx{y}^2=\bx{x}$. We set
 $$\bx{a}:=\bx{z}^{(n+1)/2},\quad \bx{b}:=\bx{z}^n,\quad \bx{c}:=\bx{x}^n.$$ Note that if $\bx{b}=1$, then $\bx{z}$ has odd order and the element $\bx{a}$ is the desired square root of $\bx{z}$. If $\bx{b}\neq 1$, then we run the loop:
  \bi
\item Find the smallest non-negative integer $d$ such that $\bx{b}^{2^d}=1$.
\item  If $d>0$ then repeat until $d=0$: $$\mbox{Set}\quad \bx{a}:=\bx{a}\bx{c}^{2^{l-d-1}},\quad \bx{b}:=\bx{b}\bx{c}^{2^l-d},\quad \bx{c}:=\bx{c}^{2^{l-d}},\quad l:=d.$$
\item When $d=0$, the element $\bx{a}$ is the desired square root of $\bx{z}$.
    \ei
\end{proof}

\begin{lemma} \label{lm:bisection}
Let $\XX$ be a black box group encrypting $\so_3(\bF)$, where $\bF$ is an unknown finite field of unknown odd characteristic. Assume that $|\bF|\geq 7$ and $\bx{i},\bx{j} \in \XX$ are two conjugate involutions. Then, given an exponent $E$ for $\XX$, we can find an involution $\bx{x}\in \XX$ such that $\bx{i}^\bx{x}=\bx{j}$ in time polynomial in $\log E$.
\end{lemma}

\begin{proof}
We set $E=2^mn$ where $(2,n)=1$, and set $\bx{z}=\bx{i}\bx{j}$.

If the order of $\bx{z}$ is odd, that is, $\bx{z}^n=1$ then notice that $\bx{i}^{\bx{z}^{(n+1)/2}}=\bx{j}$. Now, ${\bx{z}^{(n+1)/2}}\bx{j}$ is an involution conjugating $\bx{i}$ to $\bx{j}$.

Assume now that the order of $\bx{z}$ is even and $\bx{k}=\bx{i}(\bx{z})$ is the involution in $\langle \bx{z} \rangle$, see Section~\ref{sec:centralizerofprotoinvolution}. We denote by $\YY$ the subgroup in $\XX$ encrypting $\psl_2(\bF)$; it is well-known that $|\XX:\YY| =2$ and $\YY \triangleleft \XX$.

Let $\TT$ be the maximal torus in $\XX$ containing $\bx{k}$ and $\TT^2 = \{\bx{t}^2 \mid \bx{t}\in \TT\}$, then $\TT^2$ is the subgroup of index $2$ in $\TT$ and $\TT^2 = \TT \cap \YY$. Observe that  $\bx{z} = \bx{i}\bx{j} \in \TT^2$ because $\bx{i}$ and $\bx{j}$, being conjugate, simultaneously belong or do not belong to $\YY$.

We can now apply the Tonelli-Shanks algorithm for cyclic groups, Lemma~\ref{lm:Tonelli-Shanks}, and find $\bx{t} \in \TT$ such that $\bx{t}^2=\bx{z}=\bx{ij}$; after that we have
\[
\bx{i}^{\bx{t}\bx{j}}=\bx{j}\bx{t}^{-1}\bx{i}\bx{t}\bx{j}=\bx{j}\bx{t}^{-1}\bx{j}\bx{j}\bx{i}\bx{t}\bx{j}=\bx{t}\bx{j}\bx{i}\bx{t}\bx{j}=\bx{t}^2\bx{j}\bx{i}\bx{j}=\bx{j}, \]
and $\bx{x} = \bx{tj}$ is an involution since $\bx{j}$ inverts $\bx{t} \in \TT$.
\end{proof}

\section{Geometry of involutions in $\pgl_2(\bF)\simeq \so_3(\bF)$ }\label{sec:geometryofinvolutions}

Let $X = \pgl_2(\bF)\simeq\so_3(\bF)$, where $\bF$ is a finite field of odd characteristic. In this section, we see that actions of involutions from $X$ control properties of every facet of the structure of the group and its Lie algebra. Involutions are multifunctional: they act as pointers to tori in the group $X$, to toric subalgebras in the Lie algebra $\lie =\Lie(X)$ of $X$, to points and to lines in the projective plane associated with $\lie$ as $\bF$-vector space, and they control the canonical polarity on $P$.

\subsection{The Lie algebra}

Let $M$ be the $2\times 2$ matrix algebra over $\bF$. We denote the elements of $M$ by lower case Greek letters.

The Lie algebra $\lie=\lsl_2(\bF)$ of the group $X = \pgl_2(\bF)\simeq\so_3(\bF)$  is the vector space of $2\times 2$ matrices of trace $0$ with the Lie bracket $[\alpha,\beta] = \alpha\beta - \beta\alpha$ and the Killing form $K(\alpha,\beta) = \Tr(\alpha\beta)$. The isomorphism $\pgl_2(\bF)\simeq\so_3(\bF)$ comes from the adjoint action of $\gl_2(\bF)$ on   the Lie algebra $\lie$, that is, action by conjugation on $\lie$.  The group $\pgl_2(\bF)$ is the image of this action and becomes the group of automorphisms of $\lie$. Therefore it preserves the Killing form $K$ on $\lie$,
moreover, it coincides with the orthogonal group $\so_3(\lie,K)$.

The following property of matrices in $M$ can be easily checked.

\begin{lemma} \label{Lie-elementary1} Let $\alpha \in M$ be a non-scalar matrix. Then $\alpha^2$ is a scalar matrix if and only if $\Tr(\alpha) =0$.
\end{lemma}

Similarly, the following properties of $\lie = \lsl_2(\bF)$ can also be easily proven.

\begin{lemma} \label{Lie-elementary2} Let $\alpha,\beta \in \lie$, that is, $\Tr(\alpha) = \Tr(\beta) =0$.
\bi

\item[{\rm (a)}] $\alpha$ is either non-degenerate and semisimple  or $\alpha^2=0$.
\item[{\rm (b)}] $\Tr(\alpha^2) = -\det \alpha$. As a consequence, $\Tr(\alpha^2) =0$ if and only if $\alpha^2=0$.
\item[{\rm (c)}] $[\alpha,\beta]=0$ if and only if $\alpha$ and $\beta$ are collinear, that is, one of them is a scalar multiple of another one.
\item[{\rm (d)}] $\Tr(\alpha, [\alpha,\beta]) = 0$.
\ei
\end{lemma}

\subsection{Projectivization of the Lie algebra: polarity} \label{sec:projectivization}

Now consider the projective space $\bP(M)$ associated with the vector space $M$ and the natural map
\[
\omega: M \smallsetminus\{0\} \longrightarrow \bP(M).
\]
Notice that $X = \pgl_2(\bF)$ is the image of $\gl_2(\bF)$ under this map.

Denote by $P=\omega(\lie)$ the image of $\lie$ in $\bP(M)$; it is a projective plane with polarity $\pi$ induced by the Killing form on $\lie$. It follows from Lemmas \ref{Lie-elementary1} and \ref{Lie-elementary2}(a)  that
$I =P \cap X$ is the set of involutions in $X$. By Lemma \ref{Lie-elementary2}(b), the set $Q = P\smallsetminus I$ is a quadric in $P$ determined by the quadratic form $\Tr(\alpha^2)$ ($=-\det\alpha$ by Lemma~\ref{Lie-elementary2}(b)).

The group $X$ has a natural action  on $\bP(M)$ induced by the action of $\gl_2(\bF)$ on $M$ by conjugation.

Obviously, $X$ leaves invariant the projective plane $P$ and its subsets $I$ and $Q$. Moreover $X$ acts on $P$ by collineations and preserves the polarity $\pi$.

There are two kinds of points in $P$:
\bi
\item \emph{involutive} (or \emph{toric}, or \emph{semisimple}, or \emph{regular}) -- they form the set $I$;
\item \emph{unipotent} (or \emph{parabolic}, or \emph{tangent}) -- they are points on the quadric $Q$.
\ei

We shall call $I$ the \emph{involution plane}; it is a projective plane with a polarity, but with points of the corresponding quadric removed.

For points $a, b \in P$ we shall write $a \perp b$ if $a \in \pi(b)$ (which is equivalent to $b \in \pi(a)$). If $\alpha,\beta \in \lie$ are matrices representing $a$ and $b$, respectively, then $a\perp b$ is the same as $K(\alpha,\beta) = \Tr(\alpha\beta) = 0$.

Notice also that
\[
Q = \{\, a \in P : a \in \pi(a)\,\}.
\]

\subsection{Projectivization of the Lie algebra: cross product} \label{sec:cross-product}

The Lie product $[\,,\,]$ on $\lie$ induces a partial binary operation on the plane $P$: namely, if $a$ and $b$ are distinct points in $P$ represented by matrices $\alpha,\beta\in \lie$, respectively, then, in view of  Lemma \ref{Lie-elementary2}(c), $[\alpha,\beta] \ne 0$ and $\omega([\alpha,\beta])$ is defined as in Section  \ref{sec:projectivization} and does not depend on choice of $\alpha$ and $\beta$. We denote
\[
\omega([\alpha,\beta]) = a \boxtimes b
\]
and call it the \emph{cross product} of $a$ and $b$. Obviously, $a \boxtimes b = b \boxtimes a$. The cross product has an obvious connection with the polarity:

\begin{lemma}
If $a$ and $b$ are distinct points in $P$ then
\[
a \perp a \boxtimes b \;\mbox{ and }\; b \perp a \boxtimes b,
\]
and, consequently,
\[
a \boxtimes b = \pi(a) \wedge \pi(b)
\]
is the point of intersection of the polar lines of $a$ and $b$.
\end{lemma}

\begin{proof}
The proof follows from Lemma \ref{Lie-elementary2}(d).
\end{proof}

Notice that action of $X$ on $P$ preserves the cross product. We use the term ``cross product'' and notation $\boxtimes$ to emphasize the analogy with the cross product of vectors in $\bR^3$, that is, the Lie algebra operation in the Lie algebra of $\so_3(\bR)$, the group of rotations of the 3-dimensional Euclidean space.

The projective plane $P$ with polarity and cross product is a combinatorial object that retains essential properties of the Lie algebra $\lie = \lsl_2$; we shall call it the \emph{projectivization} of $\lie$.
As we shall soon see, polarity and cross product can be constructed inside of the group $X$: first on the set $I$ of involutions and then extended to the whole plane $P$ by interpreting the points of the quadric $Q$ as maximal unipotent subgroups of $X$. Moreover, these constructions can be carried out in a black box group $\XX$ encrypting $X$.

\subsection{Points} \label{sec:involutions} 

Now we turn our attention to involutions in $X$ which will serve as points in our projective plane.

In view of Lemma \ref{Lie-elementary2}(b), a  vector $\sigma \in \lie$ is semisimple in the Lie algebra sense if and only if $K(\sigma,\sigma) \neq 0$ (that is, $\sigma$ is regular in the   terminology of the theory of quadratic forms) and nilpotent if and only if $K(\sigma,\sigma)=0$ (that is, $\sigma$ is singular).

Every semisimple element $\sigma$ in $\lie$ gives rise to an involution in $X$, the half-turn $s_\sigma$ around the one-dimensional vector subspace (it is also a Lie subalgebra) generated by $\sigma$:
\begin{equation}\label{eq:halfturn}
s_\sigma: \alpha \mapsto \frac{2K(\alpha,\sigma)}{K(\sigma,\sigma)}\sigma -\alpha.
\end{equation}

Observe that the half-turn $s_\sigma$ is not changed if we replace $\sigma$ by a non-zero scalar multiple $c\sigma$.

\begin{lemma}
\label{involutions=half-turns}
An involution $s = \omega(\sigma)$ in $X$ represented by a matrix $\sigma$ is the half-turn around $\sigma$:
\[
\omega(\sigma) = s_\sigma.
\]
\end{lemma}

\begin{proof}
Indeed, in its adjoint action on  $\lie$, every involution $s$ from $X$ has eigenvalues $+1, -1, -1$. If $s =\omega(\sigma)$ then obviously $\sigma^s = \sigma$ is an eigenvector for $s$ and eigenvalue  $+1$. Obviously this means that then $s = s_\sigma$.
\end{proof}

Denote the  $+1$-eigenspace (the \emph{axis} of the half-turn) $s$ as $\tor_s$. Obviously, $\tor_s$ is a $1$-dimensional regular  subspace of $\lie$ and thus a Cartan subalgebra of $\lie$. If $T_s$ is a torus in $X$ containing $s$ then $\tor_s = \Lie(T_s)$, the Lie algebra of $T_s$.

\begin{lemma}\label{lm:perp}
Let $i,j \in I$ and $i\ne j$, then $i \perp j$ if and only if $ij=ji$.
\end{lemma}

\begin{proof}
By Lemma~\ref{involutions=half-turns}, $i$ and $j$ are half-turns. Now it easily follows from Equation \ref{eq:halfturn} in Section \ref{sec:involutions} that they commute if and only if their axes are orthogonal, that is, if and only if $i \perp j$.
\end{proof}

Lemma \ref{lm:perp} allows us to interpret the polarity restricted to the involution plane $I$ within the group $X$:

\begin{lemma}
If $i \in I$ then
\[
\pi(i)\cap I = \{\, j\in I \mid ij = ji \mbox{ and } i \ne j\,\}.
\]
\end{lemma}

We shall denote
\[
\pi_I(i) = \pi(i) \cap I.
\]

Similarly, we have the following result for the cross product:

\begin{lemma} \label{lm:cross=commute}
If $i,j \in I$ are distinct involutions and $k =i\boxtimes j \in I$ then
\[
k = \pi(i) \wedge \pi(j)
\]
is the only involution in $X$ which commutes with the both $i$ and $j$.
\end{lemma}

Later in the paper, we shall extend the  polarity and cross product to the whole projective plane $P = I \cup Q$.

\subsection{Lines} 

For every involution $s \in I$, its polar image $\pi(s)$ in $P$ can be described by taking its intersection with $I$: $\pi_I(s)= \pi(s) \cap I$.  It could happen that one or two points in $\pi(s) \cap Q$ are missing from $\pi_I(s)$. Set $q = |\bF|$. The centralizer $C=C_X(s)$ is a dihedral group of order $2(q\pm 1)$  which contains the maximal torus $T_s$ of order $q\pm 1$  inverted by $q\pm 1$ involutions in the coset $C \smallsetminus T_s$; these involutions commute with $s$ and therefore
\[
\pi_I(s) = C\smallsetminus T_s.
\]
As we shall see in the next section, there are more lines on $P$.

\subsection{The Weisfeiler plane}

Every $1$-dimensional subspace $\lia$ in $\lie$ is a Lie subalgebra of $\lie$ and coincides with the Lie algebra $\Lie(A)$ of some $1$-dimensional algebraic subgroup $A<X$. Assuming that $|\bF|=q$, the latter belongs to one of the three conjugacy classes:
\bi
\item non-split tori: cyclic subgroups of order $q+1$,
\item  maximal unipotent subgroups of order $q$,
\item split tori: cyclic subgroups of order $q-1$,
\ei
see the paper by Boris  Weisfeiler \cite{weisfeiler79.522}.

Therefore the set $W$ of $1$-dimensional algebraic subgroups $A$ in $X$ is in one-to-one correspondence $$A \leftrightarrow \Lie(A)$$ with the set of $1$-dimensional Lie subalgebras of $\lie$ and can be treated as a projective plane; we shall call \emph{the Weisfeiler plane} and denote it by $W$.

It will be convenient to identify $W$ with the dual plane $P^*$ of $P$ and treat points of $W$ as \emph{lines} of $P$. For that we need to describe the incidence relation between points and lines.

If $A$ is $1$-dimensional subgroup in $X$, the line $\ell(A)$ associated with it contains (incident with) all involutions inverting $A$; if $w$ is one of these involutions then $\ell_I = \ell(A)\cap I$ coincides with  the coset $Aw$.

\bi
\item If $A$ is a non-split torus, all $q+1$ points in $\ell(A)$ are involutions.
\item  If $A$ is a maximal unipotent subgroup, $q$ points in $\ell(A)$ are involutions in the Borel subgroup $N_X(A)$ inverting $A$, and the additional parabolic point in $\ell(A) \cap C$ is $\omega(\Lie(A))$ for $A$ itself.
\item If $A$ is a split torus, $q-1$ points in $\ell(A)$ are involutions inverting $A$; two additional parabolic points in $\ell(A)\cap Q$ are $\omega(\Lie(U))$ and $\omega(\Lie(V))$ for two maximal unipotent subgroups $U$ and $V$ normalized by $A$.
\ei

These three types of lines are called \emph{hyperbolic},  \emph{parabolic}, and  \emph{elliptic}, respectively. The parabolic lines are tangent lines to $Q$, that is, lines having exactly one point with $Q$ in common. In $I$, a parabolic line appears as the coset $U t$ of a maximal unipotent subgroup $U$ in $X$ with respect to an involution $t$ inverting every element in $U$.

\section{The black box projective plane and projectivization of the Lie algebra}\label{sec:projective-plane}

Let $\XX$ be a black box group encrypting $X=\pgl_2(\bF) \simeq \so_3(\bF)$ where $\bF$ is a finite field of odd characteristic and $|\bF|\geq 7$.

Using the black box $\XX$ as a computational engine, we shall construct a black box projective plane $\Pp$ that encrypts the projective plane $P$ discussed in Section \ref{sec:geometryofinvolutions}.

The elements or objects of $\Pp$ are \emph{points} and \emph{lines}.

\subsection{Points} There are two types of points in $\Pp$; \emph{regular} and \emph{parabolic}.

A \emph{regular point} is an involution in $\XX$; $\Ii$ is the set of all   involutions in $\XX$. To produce a random regular point, we construct an involution from a random element in $\XX$ by repeated square and multiply method and conjugate it by a random element. Note that a random element  in $\XX$ has even order with probability at least 1/4 \cite{isaacs95.139}.

A \emph{parabolic point}   a black box for a maximal unipotent subgroup $\UU<\XX$}; it is a point on the quadric $\Qq=\Pp\smallsetminus \Ii$; to construct one is the principal aim of the paper.

\subsection{Lines}\label{sec:lines}  

There are two types of lines in $\Pp$; \emph{toric} and \emph{parabolic}.

A \emph{toric} or \emph{regular line} $\ll$ is a black box for a subgroup $\TT\rtimes\langle \bx{w}\rangle$ where $\TT < \XX$ is a torus  and $\bx{w}\in \XX$ is an involution that inverts every element in $\TT$. A toric line is incident to the following points:
\bi
\item If $|\TT|=q+1$ then $\ll$ is incident only to points represented by involutions in the coset $\TT \bx{w}$;
\item If $|\TT|=q-1$ then $\ll$ is incident to $q-1$ points represented by involutions in the coset $\TT \bx{w}$ and, in addition,  two parabolic points which will be constructed later and correspond to two maximal unipotent subgroups normalized by $\TT$.
\ei
It is convenient to specify a toric line using its polar image, that is, the involution in the torus $\TT$.

A \emph{parabolic line} (or \emph{tangent} line) $\uu$ is a black box for a subgroup $\UU\rtimes\langle \bx{t}\rangle$ where $\UU<\XX$ is a maximal unipotent subgroup and $\bx{t}\in \XX$ is an involution inverting every element in $\UU$ -- it exists and can be computed by Lemma~\ref{lem:test:uni}(b). The line $\uu$ is incident to two kinds of points:
\bi
\item $q$ regular points, involutions in the coset $\UU \bx{t}$; and
\item  $\UU$ itself, seen as a point.
\ei

\subsection{Cross product on $\Ii$ and a serendipity construction of a unipotent element} \label{sec:serendipity-parabolic}

As we can see, we have immediate access only to points in $\Ii$, that is, involutions in $\XX$. Our principal tool is the cross product provided by Theorem \ref{lm:reif}.

In view of Lemma \ref{lm:cross=commute}, the operation $\boxtimes$ on $\Ii$ encrypts the cross product on $I$.

For large $q$, the probability of hitting a unipotent element $\bx{u}$ by taking product of two random involutions $\bx{s}$ and $\bx{t}$ is $O\left(1/q\right)$, as can be seen from the following argument.
Indeed, note that the number of involutions in $\XX$ is $q^2$ where $q=|\bF|$ and for a fixed involution $\bx{s}$, there are at most two unipotent subgroups normalized by $\bx{s}$. Therefore,
the number of unipotent elements of the form $\bx{s}\bx{s}^\bx{x}$, $\bx{x}\in \XX$,   is at most $2(q-1)$. Therefore, the probability of this event is approximately   $O\left(1/q\right)$ and becomes astronomically small for a large field $\bF$.

However if it happens by a sheer strike of luck,  we get a unipotent element $\bx{u}=\bx{s}\bx{t}$ and a black box for the parabolic subgroup
\[
\BB = \langle \bx{u}^{\TT_\bx{s}}\rangle\rtimes \TT_\bx{s},
\]
its maximal unipotent subgroup
\[
\UU = \langle \bx{u}^{\TT_\bx{s}}\rangle,
\]
(that is, a parabolic point)
and the set $\UU \bx{s}$ of regular points in the corresponding parabolic line, see Lemma~\ref{lem:test:uni}.

Combining Theorem \ref{lm:reif} with the arguments above, we have
\begin{theorem} \label{th:cross-with-serendipity} There is probabilistic polynomial time Las Vegas algorithm which, given two involutions $\bx{i}, \bx{j} \in \Ii$, constructs either
\bi
\item an involution $\bx{i} \boxtimes \bx{j}$, or
\item a black box subgroup $\UU\rtimes\langle\bx{i}\rangle = \UU\rtimes\langle\bx{j}\rangle$ where $\UU$ is  a maximal unipotent subgroup in $\XX$ inverted by $\bx{i}$ and $\bx{j}$. The black box for $\UU$ represents a parabolic point in $\Pp$ which coincides with $\bx{i}\boxtimes\bx{j}$ in sense of Section {\rm \ref{sec:cross-product}}.
\ei
\end{theorem}

The strategy of our proof of Theorem~\ref{cor:unipotent} is to carry out a polynomial time chain of constructions in $\Ii$ which will force the serendipity moment, that is, the second bullet point in Theorem \ref{th:cross-with-serendipity}. In the next section, we shall describe tools for computations in $\Ii$ needed for that purpose.

\subsection{A toolbox for the involution plane $\Ii$}

Constructions in this section are conditional on assumption that $\bx{s} \boxtimes \bx{t}\in \Ii$ is an involution for all involutions $\bx{s}$ and $\bx{t}$ that we encounter in our calculations -- as it was explained in Section \ref{sec:serendipity-parabolic}, this is what normally happens with very high probability.

We shall expand our algorithms to the whole black box projective plane $\Pp$ in Section \ref{sec:expansion-to-Pp}.

\subsubsection{Polar image of a point in $\Ii$} Let $q = |\bF|$.

Let $\bx{t}$ be an involution and $\TT_\bx{t}$ a torus containing it, and $\bx{w}$ an involution inverting $\TT_\bx{t}$.
\bi
\item If $|\TT_\bx{t}| = q+1$ then the polar line $\bxg{\pi}(\bx{t})$ is the coset $\TT_\bx{t}\bx{w}$, with random points in it generated in an obvious way with the help of a black box for $\TT_\bx{t} < \CC_\XX(\bx{t})$.

\item If $|\TT_\bx{t}| = q-1$ then the polar line $\bxg{\pi}(\bx{t})$ is the coset $\TT_\bx{t}\bx{w}$ together with two points represented by black boxes for maximal unipotent subgroups  $\UU$  and $\VV$  normalized by $\TT_\bx{t}$. At this stage we do not know how to construct $\UU$ and $\VV$, but it will become clear after Theorem \ref{cor:unipotent} is proven: we take a nontrivial unipotent element $\bx{u}$, construct the maximal unipotent subgroup $\AA$ and an involution $\bx{s}$ inverting $\AA$ as in Lemma~\ref{lem:test:uni}, and then construct an element $\bx{x}$ conjugating the involution $\bx{s}^{\bx{x}} = \bx{t}$. Now $\TT_\bx{t}$ normalizes $\UU = \AA^{\bx{x}}$, and if $\bx{r} \in \CC_\XX(\bx{t}) \smallsetminus \TT_\bx{t}$ then $\TT_\bx{t}$ normalizes $\UU^{\bx{r}} = \VV$ as well.
\ei

\subsubsection{Polar image of a line in $\Ii$}

Let $\kk$ be a line in $\Pp$. Given a line, we can always find on it two distinct regular points, see Section \ref{sec:lines}; take points $\bx{a} \ne \bx{b}$ on $\kk$.
Then
\[
\bxg{\pi}(\kk) = \bx{a} \boxtimes \bx{b}.
\]

\subsubsection{A line through two regular points}\label{sec:lines2}

If  $\bx{s},\bx{t}\in \Ii$ then the line  $\bx{s}\vee \bx{t}$ through  $\bx{s}$ and $\bx{t}$ is
\[
 \bx{s}\vee \bx{t} = \bxg{\pi}(\bx{s}\boxtimes \bx{t}).
\]

We shall note here that we do not list the points on the black box projective lines but we produce random elements on them when they are needed.

\subsubsection{Intersection of two lines in $\Ii$}\label{sec:int:lines}

If two lines $\kk$ and $\ll$ intersect in $\Ii$, their intersection point $\kk \wedge \ll$ can be found as
\[
\kk \wedge \ll = \bxg{\pi}\left( \bxg{\pi}(\kk) \vee \bxg{\pi}(\ll)  \right).
\]

\subsection{Expansion of the toolbox from $\Ii$ to $\Pp$} \label{sec:expansion-to-Pp}

\subsubsection{Polarity in $\Pp$}

If $\UU$ is a maximal unipotent subgroup in $\XX$ seen as a parabolic point and $\bx{t}$ an involution inverting $\UU$ which can be constructed by Lemmas \ref{lm:bireflectivity} and \ref{lem:test:uni}, then $\bxg{\pi}(\bx{\UU})$ is the tangent line
$\UU \bx{t} \cup \{\UU\}$ through $\UU$.

\subsubsection{Intersection of two lines} 

Since we have now polarity in $\Pp$, the formula is the same as in \ref{sec:int:lines}.

\subsubsection{Cross product in $\Pp$}

For two distinct points  $\bx{s}, \bx{t} \in \Pp$, 
\bea
\bx{s}\boxtimes \bx{t} &=& \bxg{\pi}(\bx{s}) \wedge \bxg{\pi}(\bx{t}). \\
\eea

\subsubsection{A line through parabolic points} 

Let $\bx{s}$ be an involution and $\UU$  a parabolic point, that is, a maximal unipotent subgroup. Observe that if $\bx{s}$ inverts $\UU$ then $\bx{s}$ belongs to the line tangent to $\Qq$ at $\UU$. In this case, we have
\[
\bx{s}\vee \UU = \UU \cup \UU\bx{s}.
\]
Assume now that $\bx{s}$ does not invert $\UU$. In this case, the line $\bx{s} \vee \UU$ is the polar line of the involution $\bx{r}$ which centralizes $\bx{s}$ and inverts $\UU$. Since $\bx{r} \in \CC_\XX(\bx{s})$, $\bx{r}$ inverts every element in the torus $\TT_{\bx{s}}$. Therefore, $\bx{r}$ can be reified from the amalgamation of the the following proto-morphisms:
\bea
\TT_{\bx{s}} \to \TT_{\bx{s}}, && \bx{t} \mapsto \bx{t}^{-1}\\
\UU \to \UU, && \bx{u} \mapsto \bx{u}^{-1}.
\eea
Moreover, since $\bx{s}$ and $\UU$ belong to the line $\bx{s}\vee \UU$ and $\bx{s}$ does not normalize $\UU$, the other missing point in this line is the unipotent group $\UU^{\bx{s}}$. Hence, in this case, we have
\[
\bx{s}\vee \UU = \TT_\bx{r}\bx{s} \cup \{\UU, \UU^{\bx{s}}\}.
\]

For the line passing through given two parabolic points $\UU$ and $\VV$, we follow the similar procedure as above. Note that, in this case, the line $\UU \vee \VV$  is the polar line of the involution $\bx{r}$ which inverts both $\UU$ and $\VV$. Therefore, $\bx{r}$ can be reified similarly and
\[
\UU\vee \VV = \TT_\bx{r} \bx{w} \cup\{\UU, \VV\}
\]
for some $\bx{w} \in \CC_\XX(\bx{r})$ inverting $\TT_\bx{r}$.

\section{Construction of $\Sym_4$}\label{sec:sym4}

It is well-known that there is only one conjugacy class of subgroups isomorphic to $\Sym_4$ in $\so_3(\bF)$ over a finite field of odd characteristic. The fundamental procedure in the coordinatization of $\Pp$ is the construction of a black box subgroup encrypting $\Sym_4$ in a black box group encrypting $\so_3(\bF)$ over a finite field of odd characteristic. As we shall soon see, a subgroup isomorphic to $\Sym_4$  provides us with a convenient basis triangle in $\Pp$.

\begin{theorem}\label{sym4:in:pgl2}
Let $\XX$ be a black box group encrypting $X=\so_3(\bF)$ over an unknown finite field $\bF$ of unknown odd characteristic and $|\bF|\geq 7$. Then, given a global exponent $E$ for $\XX$, there is a polynomial in $\log E$ time Las Vegas algorithm constructing a black box
subgroup in $\XX$ which encrypts a subgroup in $X$ isomorphic to $\Sym_4$.
\end{theorem}

We precede our proof of Theorem~\ref{sym4:in:pgl2} with a few lemmas. We work within the terminological conventions of Section~\ref{sec:blackboxsubgroups} and apply to strings and black box subgroups of $\XX$ the same terms as to corresponding elements and subgroups of $X$.

We work under assumptions of Theorem~\ref{sym4:in:pgl2}. It is well-known that $\XX$ has two conjugacy classes of involutions. We say that an involution is of \emph{$+$-type} if the order of its centralizer is $2(q-1)$ and \emph{$-$-type} if the order of its centralizer is $2(q+1)$. Notice that $\CC_\XX(\bx{i})=\TT_\bx{i}\rtimes \langle \bx{w} \rangle$ where $\TT_\bx{i}$ is a torus of order $(q\pm 1)$ and $\bx{w}$ is an involution inverting $\TT_\bx{i}$.  We will consider the involutions of $+$-type if $\qpone$ and $-$-type if $\qmone$ so that the order of the corresponding  torus is always divisible by $4$; we will call them involutions of \emph{right type}.

We are looking for a $5$-tuple
\begin{equation*}\label{set:1}
(\bx{i},\bx{j},\bx{z},\bx{s},\TT_\bx{i})
\end{equation*}
where $\bx{i}\in \XX$ is an involution of right type, $\bx{j}\in \XX$ is an involution of right type which inverts  $\TT_\bx{i}$, $\bx{z}\in \XX$ is an element of order $3$ normalizing $\langle \bx{i},\bx{j}\rangle$ and $\bx{s} \in \TT_\bx{i}$ is an element of order $4$. We also set $\bx{k}=\bx{i}\bx{j}$ and note that $\bx{k}$ is also of right type. Clearly $\langle \bx{i},\bx{j},\bx{z}\rangle$ encrypts a subgroup isomorphic to $\Alt_4$ and  $\langle \bx{i},\bx{j},\bx{z},\bx{s} \rangle$ encrypts $\Sym_4$.
The crucial part of our construction is the search an element $\bx{z}\in \XX$ of order 3 permuting some mutually commuting involutions $\bx{i},\bx{j},\bx{k} \in \XX$ of right type. The following lemma provides explicit construction of such an element.

\begin{lemma}\label{elt:3}
 Let $\bx{i},\bx{j},\bx{k} \in \XX$ be mutually commuting involutions of right type and $\bx{x} \in \XX$ be an arbitrary element. Assume that $\bx{y}_1=\bx{i}\bx{j}^\bx{x}$ has odd order $m_1$ and set $\bx{n}_1=\bx{y}_1^{\frac{m_1+1}{2}}$ and $\bx{s}=\bx{k}^{\bx{x}\bx{n}_1^{-1}}$. Assume also that $\bx{y}_2=\bx{j}\bx{s}$ has odd order $m_2$ and set  $\bx{n}_2=\bx{y}_2^{\frac{m_2+1}{2}}$. Then  the element $\bx{z}=\bx{x}\bx{n}_1^{-1}\bx{n}_2^{-1}$ permutes $\bx{i},\bx{j},\bx{k}$, and $\bx{z}$ has order $3$.
\end{lemma}

\begin{proof}
Observe first that $\bx{i}^{\bx{n}_1}=\bx{j}^\bx{x}$ and $\bx{j}^{\bx{n}_2}=\bx{s}$. Since $\bx{s}=\bx{k}^{\bx{x}\bx{n}_1^{-1}}$, we have $\bx{j}^{\bx{n}_2}=\bx{k}^{\bx{x}\bx{n}_1^{-1}}$. Hence $\bx{j}=\bx{k}^{\bx{x}\bx{n}_1^{-1}\bx{n}_2^{-1}}=\bx{k}^\bx{z}$. Now, we prove that $\bx{j}^\bx{z}=\bx{i}$. Since $\bx{j}^{\bx{x}\bx{n}_1^{-1}}=\bx{i}$, we have $\bx{j}^{\bx{z}}=\bx{j}^{\bx{x}\bx{n}_1^{-1}\bx{n}_2^{-1}}=\bx{i}^{\bx{n}_2^{-1}}$. We claim that $\bx{y}_2 \in \CC_\XX(\bx{i})$, which implies that $\bx{n}_2 \in \CC_\XX(\bx{i})$, so $\bx{j}^\bx{z}=\bx{i}^{\bx{n}_2^{-1}}=\bx{i}$. Now, since $\bx{j} \in \CC_\XX(\bx{i})$, $\bx{y}_2=\bx{j}\bx{s}\in \CC_\XX(\bx{i})$ if and only if $\bx{s} =\bx{k}^{\bx{x}\bx{n}_1^{-1}}\in \CC_\XX(\bx{i})$. Moreover, since $\bx{i}^{\bx{n}_1}=\bx{j}^\bx{x}$,  $\bx{s} \in \CC_\XX(\bx{i})$ if and only if $\bx{k}^\bx{x} \in \CC_\XX(\bx{j}^\bx{x})$, equivalently, $\bx{k} \in \CC_\XX(\bx{j})$ and the claim follows. It is now clear that $\bx{i}^\bx{z}=\bx{k}$ since $\bx{i}\bx{j}=\bx{k}$, and $\bx{z}$ has order $3$.
\end{proof}

\begin{lemma}\label{elt:3:prob}
Let $\XX$, $\bx{i},\bx{j},\bx{k},\bx{y}_1,\bx{y}_2$ and $\bx{z}$ be as in Lemma {\rm \ref{elt:3}}. Then the probability that $\bx{y}_1$ and $\bx{y}_2$ have odd orders is bounded from below by $\frac{1}{2}-\frac{1}{2|\bF|}$.
\end{lemma}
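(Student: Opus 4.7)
The plan is to write
\[
\Pr[\,|h_1|,|h_2| \text{ both odd}\,] = \Pr[|h_1| \text{ odd}] \cdot \Pr[|h_2| \text{ odd} \mid |h_1| \text{ odd}]
\]
and estimate each factor. For the first factor, as $g$ ranges uniformly over $G$, the conjugate $j^g$ is uniform on the class $j^G$, so $h_1 = ij^g$ is uniform on $\{h \in G : ih \in j^G\}$, a set of size $|j^G|$. I would partition this set by conjugacy type in $\so_3(\GF)$: identity, involution, non-trivial unipotent, or non-trivial element of a maximal torus. The unipotent contribution consists of the $2(q-1)$ non-identity elements in the two maximal unipotent subgroups normalized by $T_i$ (each automatically of odd order $p$), using that $i$ acts as $-1$ on each such subgroup. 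The torus contribution is governed by the $(q-1)/2$ split and $(q-1)/2$ non-split maximal tori inverted by $i$, within each of which the odd-order elements form the unique odd-order subgroup. A direct count of these, against the denominator $|j^G|$, shows $\Pr[|h_1| \text{ odd}]$ is bounded below by a fraction close to $1$ with error $O(1/q)$.

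Conditional on $|h_1| = m_1$ odd, the element $n_1 = h_1^{(m_1+1)/2}$ is the unique square root of $h_1$ in the cyclic torus $\langle h_1\rangle$, and the proof of Lemma~\ref{elt:3} already establishes that $s = k^{gn_1^{-1}} \in C_G(i) = T_i \rtimes \langle w_i \rangle$ is a right-type involution. In the generic case $s \ne i$, both $j$ and $s$ lie in the Weyl coset $w_i T_i$, so one writes $j = w_i \tau_j$ and $s = w_i \tau_s$ with $\tau_j, \tau_s \in T_i$. Using $w_i^2 = 1$ and the fact that $w_i$ inverts $T_i$, one computes
\[
h_2 = js = (w_i \tau_j)(w_i \tau_s) = \tau_j^{-1} \tau_s \in T_i,
\]
so $|h_2|$ is odd precisely when $\tau_j^{-1} \tau_s$ lies in the unique odd-order subgroup of $T_i$ (of size the odd part of $|T_i|$).

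To finish, one analyzes how $\tau_s$ is distributed as $g$ varies over the fiber $\{g : ij^g = h_1\}$ of size $|C_G(j)|$ for each fixed odd-order $h_1$. The expected conclusion is that $\tau_j^{-1}\tau_s$ lies in the odd-order subgroup for at least half of the $g$'s, up to an error of $O(1/q)$; combining with the Paragraph~1 estimate then yields $\tfrac{1}{2} - \tfrac{1}{2|\GF|}$. The main obstacle is this last step: exhibiting a natural pairing on the fiber (presumably coming from the $C_G(j)$-action and an involutive symmetry of the Weyl coset of $T_i$) that swaps $g$'s producing even-order $h_2$ with those producing odd-order $h_2$, with only a small exceptional set. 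The pairing is delicate because $g \mapsto \tau_s$ is twisted by the conjugation by $n_1$, and $n_1$ itself depends on $g$ through $h_1$, so the symmetry must be tracked through the cyclic structure of $\langle h_1 \rangle$ and the dihedral structure of $C_G(i)$.
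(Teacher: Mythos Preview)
Your first estimate is wrong: $\Pr[\,|h_1|\text{ odd}\,]$ is not close to $1$ but close to $\tfrac12$. The simplification you are missing, and which is the whole of the paper's argument, is to pass to the index-$2$ normal subgroup $H\cong\psl_2(\GF)$. The right-type involutions $i,j,k$ lie in $H$ (they sit in an $\Alt_4\leqslant\psl_2$), and since $H\trianglelefteq G$ the elements $j^g$ and $s=k^{gn_1^{-1}}$ lie in $H$ as well. In $H$ there is a single class of involutions, so $h_1=ij^g$ is distributed as the product of two uniformly random involutions of $H$. A direct count in $H$ then gives: with $a$ the odd and $b$ the even member of $\{(q-1)/2,(q+1)/2\}$, there are $aq$ involutions, $bq$ tori of order $a$, and $a^2$ ordered pairs of involutions with product in each such torus, so the probability of odd order is at least $bqa^2/(aq)^2=b/q\geq\tfrac12-\tfrac1{2q}$. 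No dihedral bookkeeping in $C_G(i)$ is needed.

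Because your first factor is only $\approx\tfrac12$, the conditional-probability step you outline cannot reach $\tfrac12-\tfrac1{2q}$ for the \emph{joint} event even if the fibre pairing could be made to work: the product of two factors near $\tfrac12$ is near $\tfrac14$. In fact the paper does not establish a joint bound; its argument gives the same lower bound $b/q$ for each of $\Pr[\,|h_1|\text{ odd}\,]$ and $\Pr[\,|h_2|\text{ odd}\,]$ separately, and the lemma should be read in that sense (for the use in Theorem~\ref{sym4:in:pgl2} any constant lower bound on the success probability suffices). Your observation that $h_2=\tau_j^{-1}\tau_s\in T_i$ is correct but actually works against you, since the odd part of $T_i$ has index at least $4$; this confirms that the factorisation approach cannot yield the stated constant.
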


\begin{proof}
We first note that the subgroup $\langle \bx{i},\bx{z}\rangle \cong \Alt_4$ is a subgroup of $\YY\leq \XX$ where $\YY\cong \psl_2(\bF)$, so the involutions $\bx{i},\bx{j},\bx{k}$ belong to a normal subgroup isomorphic to $ \psl_2(\bF)$. Therefore it is enough to compute the estimate in $\YY$. Notice that all involutions in $\YY$ are conjugate. Therefore the probability that $\bx{y}_1$ and $\bx{y}_2$ have odd orders is the same as the probability of the product of two random involutions from $\YY$ to be of odd order.

The rest of computation is done in the underlying group $Y=\pi(\YY)$. We set $|\bF|=q$ and  we denote by $a$ one of these numbers $(q\pm 1)/2$ which is odd and by $b$ the other one. Then $|Y|=q(q^2-1)/2=2abq$ and $|C_Y(i)|=2b$ for any involution $i\in Y$. Hence the total number of involutions is
\[
\frac{|Y|}{|C_Y(i)|}=\frac{2abq}{2b}=aq.
\]

Now we compute the number of pairs of involutions $(i,j)$ such that their product $ij$ belongs to a torus of order $a$. Let $T$ be a torus of order $a$. Then $N_Y(T)$ is a dihedral group of order $2a$. Therefore the involutions in $N_Y(T)$ form the coset $N_Y(T)\backslash T$ since $a$ is odd. Hence, for every torus of order $a$, we have $a^2$ pairs of involutions whose product belong to $T$. The number of tori of order $a$ is $|Y|/|N_Y(T)|=2abq/2a=bq$. Hence, there are $bqa^2$ pairs of involutions whose product belong to a torus of order $a$. Thus the desired probability is
$$
\frac{bqa^2}{(aq)^2}=\frac{b}{q}\geqslant \frac{q-1}{2q}=\frac{1}{2}-\frac{1}{2q}.
$$
\end{proof}

\begin{proof}[Proof of Theorem \ref{sym4:in:pgl2}]
Let $E=2^mn$ where $(2,n)=1$. We first construct an involution $\bx{i}\in \XX$ of right type and an element $\bx{s} \in \CC_\XX(\bx{i})$ of order $4$. Let $\bx{i}\in \XX$ be an involution constructed from a random element by taking its power using square-and-multiply method. To check whether $\bx{i}$ is an involution of right type or not, we search for an element $\bx{s} \in \CC:=\CC_\XX(\bx{i})$ of order 4. Note that a random element from $\CC$ can be constructed efficiently by the arguments in Section \ref{sec:centralizerofprotoinvolution}. Note also that if $\bx{i}$ is of right type then $\CC$ contains elements of order 4, otherwise, $\CC$ does not contain elements of order 4. If $\bx{i}$ is of right type then, since $\CC = \TT_\bx{i}\rtimes \langle \bx{w} \rangle$, where $\TT_\bx{i}$ is a torus of order $q \pm 1$ and $\bx{w}$ is an involution which inverts $\TT_{\bx{i}}$,  a random element from $\CC$ has order divisible by 4 with probability at least $1/4$. As soon as we find an element $\bx{y}\in \CC$ such that $\bx{y}^n\neq 1$ and $\bx{y}^{2n}\neq 1$, then we construct an element $\bx{s} \in \langle \bx{y} \rangle$ of order $4$ by repeated square-and-multiple method. If we can not find an element of order $4$ in $\CC$, we deduce that $\bx{i}$ is not of right type and we start from the beginning.

Let $\bx{i}\in \XX$ be a right type involution. The coset $\TT_\bx{i} \bx{w}$ of $\TT_\bx{i}$ in $\CC$ consists of the involutions inverting $\TT_\bx{i}$, so half of the elements of $\CC$ are the involutions inverting $\TT_\bx{i}$ and half of the involutions in $\TT_\bx{i} \bx{w}$ are of the same type as $\bx{i}$. We construct an involution $\bx{j}\in \CC$ and check whether $\bx{j}$ is an involution of right type by following the same arguments above.

Finally, for commuting right type involutions $\bx{i},\bx{j}\in \XX$, we construct an element $\bx{z}$ of order 3 normalizing $\langle \bx{i},\bx{j}\rangle$ by using Lemma \ref{elt:3}. The probability of constructing such an element $\bx{z}\in \XX$ is at least $\frac{1}{2}-\frac{1}{2|\bF|}$ by Lemma \ref{elt:3:prob}. Hence $\langle \bx{s},\bx{z} \rangle$ is a black box subgroup encrypting $\Sym_4$.
\end{proof}

\section{Coordinatization and a construction of a black box field} \label{sec:coordinatisation}

To construct a black box field in $\XX$, all we need is to carry out Hilbert's coordinatization of $\Pp$ \cite{hartshorne67} using our toolbox from Section~\ref{sec:projective-plane}.

\subsection{The spinor basis}

A construction from Section~\ref{sec:sym4} yields a black box subgroup $\HH$ encrypting $\Sym_4$ and we shall need to introduce special notation for some of its elements as they will play the central role in later calculations.

We denote the three involutions in the $4$-group $\EE=O_2(\HH)$ by $\bx{e}_1,\bx{e}_2,\bx{e}_3$. If $\tor_1, \tor_2, \tor_3$ are the centralizers in the Lie algebra $\lie$ of their images $\pi(\bx{e}_1),\pi(\bx{e}_2),\pi(\bx{e}_3)$, respectively, we know that they are orthogonal to each other and
\[
\lie = \tor_1 \oplus \tor_2 \oplus \tor_3
\]
is the weight decomposition for the action of $\EE$ on $\lie$ and is therefore a grading of $\lie$:
\[
[\tor_1, \tor_2] = \tor_3,\quad [\tor_2, \tor_3] = \tor_1,\quad [\tor_3, \tor_1] = \tor_2.
\]
Moreover, an element $\bx{z}$ of order $3$ from $\HH$ cyclically permutes $\tor_1, \tor_2, \tor_3$, which allows us to select a basis in $\lie$ made of
\[
\epsilon_1 \in \tor_1, \quad \epsilon_2 = \epsilon_1^\bx{z} \in \tor_2, \;\mbox{ and }\; \epsilon_3 = \epsilon_2^\bx{z} \in \tor_3.
\]
Since $\EE$ lies in the commutator of $\HH$, the involutions $\bx{e}_i\in \EE$ have spinor norm $1$ and therefore vectors $\epsilon_i$ can be chosen to satisfy
\[
K(\epsilon_i,\epsilon_i) = 1
\]
forming an orthonormal basis in $\lie$,
\[
K(\epsilon_i, \epsilon_j) = \delta_{ij}.
\]
In particular, the quadric $\Qq$ in $\Pp$  can be written by the equation
\[
\bx{x}_1^2+\bx{x}_2^2+\bx{x}_3^2 =0
\]
in the coordinates $\bx{x}_1,\bx{x}_2,\bx{x}_3$ associated with the basis $\epsilon_1, \epsilon_2,\epsilon_3$.

In addition, the basis $\epsilon_1, \epsilon_2,\epsilon_3$ seen as a basis of the Lie algebra $\lie$  obviously satisfies the Lie relations
\[
[\epsilon_1, \epsilon_2] = a\epsilon_3, \quad  [\epsilon_2, \epsilon_3] = a\epsilon_1, \quad [\epsilon_3, \epsilon_1] = a\epsilon_2,
\]
for some fixed $a \in \bF_q^*$. What we found is an analogue of a \emph{spinor basis} (or \emph{Pauli basis}) from quantum mechanics and we will discuss these in detail elsewhere.

\subsection{First steps towards the coordinatization of $\Pp$}

We know that $\epsilon_1, \epsilon_2,\epsilon_3$ form an orthonormal basis in $\lie$ and  $\bx{e}_1,\bx{e}_2,\bx{e}_3$ have
homogeneous coordinates $$(1,0,0), (0,1,0), (0,0,1);$$
and the quadric $\Qq$ is given in  coordinates $\bx{x}_1,\bx{x}_2,\bx{x}_3$ associated with this basis by the equation
\[
\bx{x}_1^2+\bx{x}_2^2+\bx{x}_3^2=0.
\]
Following traditional notation, we represent lines in $\Pp$ by equations of the form
\[
\XX_1\bx{x}_1 +\XX_2\bx{x}_2+\XX_3\bx{x}_3 = 0
\]
and treat the tuple $[\XX_1,\XX_2,\XX_3]$ as the homogeneous coordinates of the line.

\subsection{First steps in construction of a black box field} \label{sec:constructionBBfield}

We shall now construct a black box field $\KK$. Towards this end, we take the set of points on the line $\bx{e}_1 \vee \bx{e}_3$ for the extended field $\KK \cup\{\bxg{\infty}\}$ by assigning the coordinate $\bx{x}_1=\bx{0}$ to $\bx{e}_3$ and $\bx{x}_1=\bxg{\infty}$ to $\bx{e}_1$. We call the line $\bx{e}_1 \vee \bx{e}_3$ the $\bx{x}_1$-axis and similarly the line $\bx{e}_2 \vee \bx{e}_3$ the $\bx{x}_2$-axis.

Taking into account that the coordinatization of $\Pp$ has to be consistent with the action of $\XX$, and, in particular, with the action of $\HH$ on the basis $\bx{e}_1,\bx{e}_2,\bx{e}_3$, we see that if we take the line $\bx{e}_1 \vee \bx{e}_2$ for the line at infinity, we have the following:

\begin{center}
\setlength{\unitlength}{40mm}

\begin{picture}(1.5,1.2)(-.25, -.25)

\thicklines

%Base triangle
\put(0,0){\circle*{0.03}}
\put(.5,.75){\circle*{0.03}}
\put(1,0){\circle*{0.03}}
\put(0,0){\line(1,0){1}}
\put(0,0){\line(2,3){.5}}
\put(.5,.75){\line(2,-3){0.5}}
\put(-.3,-.1){{\small $\bx{e}_3=(0,0,1)$}}
\put(.9,-.1){{\small $\bx{e}_1=(\bxg{\infty},0,1)$}}
\put(0,.8){{\small $\bx{e}_2=(0,\bxg{\infty},1)$}}

\thinlines

\end{picture}
\end{center}

\noindent
Here, $0$ and $1$ can be seen as elements of our future black box field $\KK$.

The following is the same picture in homogeneous coordinates:

\begin{center}
\setlength{\unitlength}{40mm}

\begin{picture}(1.5,1.2)(-.25, -.25)

\thicklines

%Base triangle
\put(0,0){\circle*{0.03}}
\put(.5,.75){\circle*{0.03}}
\put(1,0){\circle*{0.03}}
\put(0,0){\line(1,0){1}}
\put(0,0){\line(2,3){.5}}
\put(.5,.75){\line(2,-3){0.5}}
\put(-.4,-.1){{\small $\bx{e}_3=(0,0,1)$}}
\put(.9,-.1){{\small $\bx{e}_1=(1,0,0)$}}
\put(.36,.78){{\small $\bx{e}_2=(0,1,0)$}}

\end{picture}
\end{center}

We shall gradually assign coordinates to more and more points in $\Pp$, at every step ensuring that the coordinatization is consistent with the action of $\XX$ on $\Ii$ and $\Pp$ and hence with the vector space structure on $\lie$.
If a point $\bx{x}\in \Pp$ has coordinates $\bx{x}_1,\bx{x}_2, \bx{x}_3$, we shall write
\[
\bx{x}= (\bx{x}_1,\bx{x}_2,\bx{x}_3).
\]
Similarly, we denote lines by their coordinates,
\[
{\bx{\ell}} = [\XX_1,\XX_2,\XX_3]
\]
which denote the line
\[
{\bx{\ell}} = \{\, (\bx{x}_1,\bx{x}_2,\bx{x}_3) \mid X_1\bx{x}_1 +X_2\bx{x}_2+X_3\bx{x}_3 = 0\,\}.
\]
We note  that $(\bx{x}_1,\bx{x}_2,\bx{x}_3)$ and $[\XX_1,\XX_2,\XX_3]$ are homogeneous coordinates, they are defined up to multiplication by a non-zero scalar.

Observe that polarity has a very simple meaning in terms of homogeneous coordinates associated with an orthonormal basis:
\[
\polar((\bx{x}_1,\bx{x}_2,\bx{x}_3)) = [\XX_1,\XX_2,\XX_3] \;\mbox{ if and only if }\; \XX_1 = \bx{x}_1,\; \XX_2 = \bx{x}_2,\; \XX_3 = \bx{x}_3.
\]
In particular, polar images of the base points $\epsilon_i$ have equations $\bx{x}_i=0$, $i=1,2,3$, and homogeneous coordinates
\[
\polar(\epsilon_1) = [1,0,0],\quad \polar(\epsilon_2) = [0,1,0],\quad \polar(\epsilon_3) = [0,0,1].
\]

So we have, in the black box setup, the following picture.

\begin{center}
\setlength{\unitlength}{40mm}

\begin{picture}(1.5,1.2)(-.25, -.25)

\thicklines

%Base triangle
\put(0,0){\circle*{0.03}}
\put(.5,.75){\circle*{0.03}}
\put(1,0){\circle*{0.03}}
\put(0,0){\line(1,0){1}}
\put(0,0){\line(2,3){.5}}
\put(.5,.75){\line(2,-3){0.5}}
\put(-.4,-.1){{\small $\bx{e}_3=(0,0,1)$}}
\put(.9,-.1){{\small $\bx{e}_1=(1,0,0)$}}
\put(.36,.78){{\small $\bx{e}_2=(0,1,0)$}}
\put(.78,.38){{\small $[0,0,1]$}}
\put(.4,-.1){{\small $[0,1,0]$}}
\put(0,.38){{\small $[1,0,0]$}}
\end{picture}
\end{center}

We shall soon add new points to this picture.

\subsection{The unity element in $\KK$}\label{subsec:unity}

So far, we know which elements on the $\bx{x}_1$-axis represent the point $0$ and $\bxg{\infty}$ and now we construct the unity element on the $x_1$-axis.

Let $\bx{z}$ be an element of order $3$ in $\HH$ which permutes the basis points $\bx{e}_1,\bx{e}_2,\bx{e}_3$. Pick in $N_\HH(\langle{\bx{z}}\rangle)$ an involution $\bx{d}_1$ which commutes with $\bx{e}_1$. Observe that $\EE\rtimes \langle \bx{d}_1\rangle$ is a dihedral group of order $8$ and therefore $\bx{e}_2^{\bx{d}_1}=\bx{e}_3$.

Now turn to the use of homogeneous coordinates. Recall that $\bx{e}_2 = (0,1,0)$ and $\bx{e}_3 = (0,0,1)$. There are two involutions which conjugate $\bx{e}_2$ and $\bx{e}_3$ (see Equation \ref{eq:halfturn} in Section \ref{sec:involutions}):
\bea
s_{(0,1,1)}(\bx{e}_2) &=& s_{(0,1,1)}((0,1,0))\\ &=& \frac{2(0\cdot 0+1\cdot 1+1\cdot 0)}{0^2+1^2+1^2}(0,1,1) - (0,1,0)\\ &=&  (0,0,1)\\ &=& \bx{e}_3
\eea
and
\bea
\quad\; s_{(0,1,-1)}(\bx{e}_2) &=& s_{(0,1,-1)}((0,1,0))\\ &=& \frac{2(0\cdot 0+1\cdot 1+(-1)\cdot 0)}{0^2+1^2+1^2}(0,1,-1) - (0,1,0)\\ &=&  (0,0,-1)\\ &=& \bx{e}_3
\eea

We can assign to $\bx{d}_1$ the coordinates $(0,1,1)$ and set
\[
\bx{d}_2 = \bx{d}_1^{\bx{z}} = (1,0,1) \mbox{ and } \bx{d}_3 = \bx{d}_1^{\bx{z}^2} = (1,1,0).
\]
So we have now a richer picture:

\begin{center}
\setlength{\unitlength}{40mm}

\begin{picture}(1.5,1.2)(-.25, -.25)

\thicklines

%Base triangle
\put(0,0){\circle*{0.03}}
\put(.5,.75){\circle*{0.03}}
\put(1,0){\circle*{0.03}}
\put(0,0){\line(1,0){1}}
\put(0,0){\line(2,3){.5}}
\put(.5,.75){\line(2,-3){0.5}}
\put(-.2,-.1){{\small $\bx{e}_3=(0,0,1)$}}
\put(.9,-.1){{\small $\bx{e}_1=(1,0,0)$}}
\put(.3,.8){{\small $\bx{e}_2=(0,1,0)$}}

\put(.75,.375){\circle*{0.03}}
\put(.78,.38){{\small $\bx{d}_3=(1,1,0)$}}
\put(.5,0){\circle*{0.03}}
\put(.3,-.1){{\small $\bx{d}_2=(1,0,1)$}}
\put(.25,.375){\circle*{0.03}}
\put(-0.2,.38){{\small $\bx{d}_1=(0,1,1)$}}
\end{picture}
\end{center}

\subsection{More about $\HH$}

We record for future use that the natural isomorphism
\[
\HH \longrightarrow \Sym_4,
\]
where $\Sym_4$ is seen as the symmetric group of the set $\{\,0,1,2,3\,\}$ in notation chosen in such a way that
\[
\begin{array}{lll}
\bx{e}_1 \mapsto (01)(23) & \quad {\bx{z}} \mapsto (123) \quad & \bx{d}_1 \mapsto (23)\\
\bx{e}_2 \mapsto (02)(13) &   & \bx{d}_2 \mapsto (13)\\
\bx{e}_3 \mapsto (03)(12) &   & \bx{d}_3 \mapsto (12)
\end{array}.
\]
In particular,
\[
\bx{d}_2^{\bx{d}_3}= \bx{d}_1, \quad \bx{e}_1^{\bx{d}_3} = \bx{e}_2.
\]

\subsection{Affine coordinates} \label{ss:coordinates}

Taking, as we have already did, the line $\bx{e}_1\vee \bx{e}_2$ for the line at infinity and the lines $\bx{x}_2=0$ and $\bx{x}_1=0$ for the coordinate axes, we get

\begin{center}
\setlength{\unitlength}{40mm}

\begin{picture}(1.5,1.2)(-.25, -.25)

\thicklines

%Base triangle
\put(0,0){\circle*{0.03}}
\put(.5,.75){\circle*{0.03}}
\put(1,0){\circle*{0.03}}
\put(0,0){\line(1,0){1}}
\put(0,0){\line(2,3){.5}}
\put(.5,.75){\line(2,-3){0.5}}
\put(-.3,-.1){{\small $\bx{e}_3=(0,0,1)$}}
\put(.9,-.1){{\small $\bx{e}_1=(\bxg{\infty},0,1)$}}
\put(0,.8){{\small $\bx{e}_2=(0,\bxg{\infty},1)$}}

\put(.75,.375){\circle*{0.03}}
\put(.8,.4){{\small $\bx{d}_3=(\bxg{\infty},\bxg{\infty},1)$}}
\put(.5,0){\circle*{0.03}}
\put(.3,-.1){{\small $\bx{d}_2=(1,0,1)$}}
\put(.25,.375){\circle*{0.03}}
\put(-0.2,.38){{\small $\bx{d}_1=(0,1,1)$}}

\thinlines
\end{picture}
\end{center}

Observe that this assignment of coordinates agrees with action by $\HH$. In particular, conjugation by $\bx{d}_3$ moves the points $0$, $1$, $\bxg{\infty}$ on the $\bx{x}_1$-axis to the points $0$, $1$, $\bxg{\infty}$ on the $\bx{x}_2$-axis, respectively. Therefore we can treat both coordinate axes as the two copies of the projective line $\KK \cup\{\bxg{\infty}\}$ over the black box field $\KK$ that we will construct on the $\bx{x}_1$-axis.

Now on ``this side of infinity'', on the  affine plane $\bx{x}_3\ne 0$, the homogeneous  coordinates of arbitrary point $\bx{x}$ can be written as
$(\bx{x}_1,\bx{x}_2, 1)$, where
\[
\bx{x}_1 = (\bx{x} \vee \bx{e}_2) \wedge (\bx{e}_1 \vee \bx{e}_3) \;\mbox{ and }\; \bx{x}_2 = (\bx{x} \vee \bx{e}_1) \wedge (\bx{e}_2 \vee \bx{e}_3)
\]
are projections of $\bx{x}$ onto the coordinate axes, and we get the classical coordinatization of the affine plane \cite{coxeter2003projective}:

\begin{center}
\setlength{\unitlength}{40mm}

\begin{picture}(5,1.2)(-.4, -.25)

\thicklines

%Base triangle
\put(0,0){\circle*{0.03}}
\put(.5,.75){\circle*{0.03}}
\put(1,0){\circle*{0.03}}
\put(1,0){\line(-5,2){0.8}}
\put(0,0){\line(1,0){1}}
\put(0,0){\line(2,3){.5}}
\put(.5,.75){\line(2,-3){0.5}}
\put(.5,.75){\line(-1,-4){0.187}}
\put(.3125,0){\circle*{0.03}}
\put(.2,-0.1){{\small $(\bx{x}_1,0,1)$}}
\put(-.3,-.1){{\small $(0,0,1)$}}
\put(.85,-.1){{\small $(\bxg{\infty},0,1)$}}
\put(0.15,.75){{\small $(0,\bxg{\infty},1)$}}
\put(.375,.25){\circle*{0.03}}
\put(.41,.26){{\small $(\bx{x}_1,\bx{x}_2,1)$}}
\put(.21,.3175){\circle*{0.03}}
\put(-.1,.31){{\small $(0,\bx{x}_2,1)$}}

\put(1.5,0){\circle*{0.03}}
\put(1.5,0){\line(1,0){1}}
\put(1.5,0){\line(0,1){.75}}
\put(1.5,.25){\circle*{0.03}}
\put(1.5,.25){\circle*{0.03}}
\put(1.5,.25){\line(1,0){.5}}
\put(2,.0){\line(0,1){.25}}
\put(2,.0){\circle*{0.03}}
\put(2.02,-.1){{\small $(\bx{x}_1,0)$}}
\put(2,.25){\circle*{0.03}}
\put(2.02,.27){{\small $(\bx{x}_1,\bx{x}_2)$}}
\put(1.25,.27){{\small $(0,\bx{x}_2)$}}
\put(1.3,-.1){{\small $(0,0)$}}
\end{picture}
\end{center}

If $\bx{x}$ lies on the line at infinity $\bx{x}_3 =0$ then we can take any point $\bx{x}'$ on the line $\bx{e}_3 \vee \bx{x}$, construct its affine coordinates $(\bx{x}'_1,\bx{x}'_2,1)$ as above and take the triple $(\bx{x}'_1,\bx{x}'_2,0)$ for the homogeneous coordinates of $\bx{x}$.

\subsection{Addition $\oplus$  on $\KK$} \label{sec:add-in-K}

Now we can introduce the field operations in the usual way, as shown on the following two diagrams, see Hartshorne \cite{hartshorne67} for details. We do this on the $\bx{x}_1$-axis. Note that the set of points on this axis consists of involutions in $\CC_\XX(\bx{e}_2)$ except $\bx{e}_2$ together with the two parabolic points -- the two maximal unipotent subgroups inverted by $\bx{e}_2$ -- if the involution $\bx{e}_2$ is of the $+$-type in the sense of Section \ref{sec:sym4}.

\setlength{\unitlength}{40mm}

\begin{picture}(5,1.25)(-.3, -.25)

\thicklines

\put(0,0){\circle*{0.03}}
\put(0,0){\vector(1,0){2}}
\put(-.15,-.1){{\small $\bx{e}_3 = (0,0)$}}
\put(0,0){\vector(0,1){.8}}
\put(-.4,.475){{\small $\bx{d}_1 =(0,1)$}}
\put(0,.5){\circle*{0.03}}
\put(0,.5){\line(1,0){1.5}}
\put(.5,0){\line(0,1){.75}}
\put(.5,.5){\circle*{0.03}}
\put(.54,.54){{\small $\bx{c}$}}
\put(.5,0){\circle*{.03}}
\put(.45,-.1){{\small $\bx{a}$}}
\put(0,.5){\line(2,-1){1}}
\put(1,0){\circle*{.03}}
\put(.97,-.1){{\small $\bx{b}$}}
\put(.5,.5){\line(2,-1){1}}
\put(1.5,0){\circle*{.03}}
\put(1.45,-.1){{\small $\bx{a}\oplus \bx{b}$}}
\end{picture}

In terms of our toolbox, we first construct
\[
\bx{c} = (\bx{a} \vee \bx{e}_2) \wedge (\bx{d}_1 \vee \bx{e}_1),
\]
then we construct the point at infinity on the line $\bx{d}_1 \vee \bx{b}$ and denote it $\bxg{\infty}_{\bx{d}_1, \bx{b}}$:
\[
\bxg{\infty}_{\bx{d}_1,\bx{b}} = (\bx{d}_1 \vee \bx{b}) \wedge (\bx{e}_1 \vee \bx{e}_2),
\]
then $\bx{a}\oplus \bx{b}$ is the point of intersection of the line $\bx{c} \vee \bx{d}$ parallel to $\bx{d}_1 \vee \bx{b}$ with the $\bx{x}_1$-axis $\bx{e}_1 \vee \bx{e}_3$:
\[
\bx{a}\oplus \bx{b} = (\bx{c} \vee \bxg{\infty}_{\bx{d}_1,\bx{b}}) \wedge (\bx{e}_1 \vee \bx{e}_3).
\]

\subsection{Multiplication $\otimes$ on $\KK$} \label{sec:multiplication-in-K}

\begin{center}
\setlength{\unitlength}{40mm}

\begin{picture}(5,1.25)(-.3, -.25)

\thicklines

\put(0,0){\circle*{0.03}}
\put(0,0){\line(1,0){2.7}}
\put(-.2,-.1){{\small $\bx{e}_3=(0,0)$}}
\put(0,0){\line(0,1){1}}
\put(0,0){\line(1,1){1}}
\put(1.1,1){{\small $\bx{e}_3 \vee \bx{d}_3$}}
\put(.4,0){\circle*{.03}}
\put(.25,-.1){{\small $\bx{d}_2=(1,0)$}}
\put(.4,0){\line(0,1){.6}}
\put(.06,.4){{\small $\bx{c}=(1,1)$}}
\put(.4,.4){\circle*{.03}}
\put(.8,0){\circle*{.03}}
\put(.78,-.1){{\small $\bx{a}$}}
\put(.8,0){\line(0,1){.9}}
\put(.44,.8){{\small $\bx{d}=(\bx{a},\bx{a})$}}
\put(.8,.8){\circle*{.03}}
\put(1.2,0){\circle*{.03}}
\put(.4,.4){\line(2,-1){.8}}
\put(1.18,-.1){{\small $\bx{b}$}}
\put(.8,.8){\line(2,-1){1.6}}
\put(2.4,0){\circle*{.03}}
\put(2.31,-.1){{\small $\bx{a}\otimes \bx{b}$}}
\end{picture}
\end{center}

In terms of our toolbox, we first construct the line $\bx{x}_1=\bx{x}_2$ as $\bx{e}_3\vee \bx{d}_3$, then the point $\bx{c}=(1,1)$ as
\[
(\bx{e}_3\vee \bx{d}_3)\wedge (\bx{d}_2 \vee \bx{e}_2),
\]
and point $\bx{d} = (\bx{a},\bx{a})$ as
\[
\bx{d}= (\bx{e}_3\vee \bx{d}_3) \wedge (\bx{a} \vee \bx{e}_2),
\]
then the point at infinity of the line $\bx{b} \vee \bx{c}$ as
\[
\bxg{\infty}_{\bx{b},\bx{c}} =(\bx{b} \vee \bx{c}) \wedge (\bx{e}_1 \vee \bx{e}_2),
\]
the line through the point $\bx{d}$ parallel to $\bx{b} \vee \bx{c}$ as
\[
\bx{d} \vee \bxg{\infty}_{\bx{b},\bx{c}},
\]
and, finally, the product $\bx{a}\otimes \bx{b}$ as the point of intersection of that line with the $\bx{x}_1$-axis $\bx{e}_1 \vee \bx{e}_3$:
\[
\bx{a}\otimes \bx{b} = (\bx{e}_1 \vee \bx{e}_3) \wedge (\bx{d} \vee \bxg{\infty}_{\bx{b},\bx{c}}).
\]

\subsection{Inversion and negation in $\KK$}

Forming the negative
\[
\bx{x} \mapsto \mathop{\footnotesize \ominus}\bx{x}
\]
and inversion
\[
\bx{x} \mapsto \bx{x}^{\ominus}
\]
on $\KK$ are much easier to compute than addition and multiplication.  Here are two useful observations.

If $\bx{x} = ({\bxg{\chi}},0,1)$ is a point in the $\bx{x}_1$-axis,
\bea
s_{(0,0,1)}(\bx{x}) &=& s_{(0,0,1)}(({\bxg{\chi}},0,1))\\
    &=& \frac{2(0\cdot {\bxg{\chi}}+ 0\cdot 0 +1\cdot 1)}{0^2+0^2+1^2}(0,0,1) - ({\bxg{\chi}},0,1)\\
    &=& (0,0,2) - ({\bxg{\chi}},0,1)\\
     &=& (-{\bxg{\chi}}, 0,1)\\
     &=& \mathop{\footnotesize \ominus} \bx{x}
\eea
and
\bea
s_{(1,0,1)}(\bx{x}) &=& s_{(1,0,1)}(({\bxg{\chi}},0,1))\\
    &=& \frac{2(1\cdot {\bxg{\chi}}+ 0\cdot 0 +1\cdot 1)}{1^2+0^2+1^2}(1,0,1) - ({\bxg{\chi}},0,1)\\
    &=& ({\bxg{\chi}}+1,0,{\bxg{\chi}}+1) - ({\bxg{\chi}},0,1)\\
    &=& (1,0,{\bxg{\chi}})\\
    &=& (1/{\bxg{\chi}}, 0,1)\\
    &=& \bx{x}^{\ominus}.
\eea
Therefore the field operations of taking negative and inversion
\[
\bx{x} \mapsto \mathop{\footnotesize \ominus} \bx{x},\quad \bx{x} \mapsto \bx{x}^{\ominus}
\]
on $\KK$ are computable by single conjugations, that is, for a regular point $\bx{x}$ in $\bxg{\pi}(\bx{e}_2)$,
\[
\ominus \bx{x} = \bx{x}^{\bx{e}_3}  \;\mbox{  and }\; \bx{x}^{\ominus}=\bx{x}^{\bx{d}_2}.
\]
This completes the construction of the black box field $\KK$.

\subsection{Square roots in $\KK$} \label{ss:squareroot}

Given an element $\bx{x} \in \KK$,  a number of polynomial time Las Vegas algorithms allow us to find a square root of $\bx{x}$ in $\KK$, if it exists. In our context, the Tonelli-Shanks algorithm, Lemma~\ref{lm:Tonelli-Shanks}, is suitable for our purposes since the multiplicative group $\KK^*$ of $\KK$ is isomorphic to a torus in $\XX$ and inherits the global exponent from $\XX$.

\section{Enforced serendipity: construction of unipotent elements} \label{sec:serendipity}

The aim of this section is to prove Theorem \ref{cor:unipotent}. We start by presenting a test which decides whether an element in $\pgl_2(\bF)\simeq \so_3(\bF)$ is unipotent or not when the characteristic of the field $\bF$ is not known. This allows us to make our algorithm presented in Theorem \ref{cor:unipotent} a Las Vegas algorithm.

We will use the following lemma to locate a unipotent element in $\XX$.

\begin{lemma}\label{lem:quad:res}
Let $\qmone$. Then, for random $x,y \in \bF_q$, $-x^2-y^2$ is a non-zero quadratic residue with probability $1/2-1/2q^2$.
\end{lemma}

\begin{proof}
By \cite[Theorem 10.5.1]{berndt1998}, for a fixed $0\neq c \in \bF_q$, the number of solutions of the equation $-x^2-y^2=c^2$  is $q+1$. Since there are $(q-1)/2$ non-zero elements in $\bF_q$ which are quadratic residue, $-x^2-y^2$ is a quadratic residue with probability
\[
\frac{(q+1)(q-1)}{2q^2}=\frac{q^2-1}{2q^2}= \frac{1}{2}- \frac{1}{2q^2}
\]
for random $x,y \in \bF_q$. Hence the result follows.
\end{proof}

\begin{proof}[Proof of Theorem \ref{cor:unipotent}]

Let $\YY$ be a black box group encrypting $\psl_2(\bF)$ over some unknown field of unknown odd characteristic $p$. Let $E$ be an exponent for $\YY$ and $E=2^mn$, $(2,n)=1$.

First, we construct a black box group $\XX$ encrypting $\so_3(\bF)$ from the given black box group $\YY$ by using Theorem \ref{psl2-pgl2}. Next, we construct three commuting involutions $\bx{e}_1, \bx{e}_2, \bx{e}_3$ in $\YY$ which we take for an orthogonal basis in our projective plane and the  starting points for its coordinatization. Then we construct a black box subgroup $\HH < \XX$ encrypting $\Sym_4$ containing $\bx{e}_1, \bx{e}_2, \bx{e}_3$ (Theorem~\ref{sym4:in:pgl2}). Note that $\bx{e}_1, \bx{e}_2, \bx{e}_3$, being involutions in $\YY$, are right type involutions in $\XX$ in the sense of Section \ref{sec:sym4}. Finally, by following the procedures described in Section \ref{sec:coordinatisation}, we have a black box field $\KK$ with addition, $\oplus$, and multiplication, $\otimes$, together with the procedures for computing multiplicative and additive inverses. Let $\KK$ be defined on the $\bx{x}_1$-axis, that is, the elements of $\KK \cup \{\bxg{\infty} \}$ are the involutions in the coset
\[
\TT_{\bx{e}_2}\bx{e}_1 = \TT_{\bx{e}_2}\bx{e}_3,
\]
together with two parabolic points --  the maximal unipotent subgroups normalized by $\bx{e}_2$ --  if the involution $\bx{e}_2$  is of $+$-type in sense of Section \ref{sec:sym4}, and we choose the involutions $\bx{e}_3$ and $\bx{e}_1$ for the roles of\/ $\bx{0}$ and $\bxg{\infty}$, respectively.

We work in the affine plane $\bx{x}_3 = \bx{1}$ in $\Pp$ as constructed in Section~\ref{ss:coordinates} with coordinates $(\bx{x}_1,\bx{x}_2, \bx{1})$. In these coordinates, the  quadratic equation which defines the conic $\Qq$ is $\bx{x}_1^2 \oplus \bx{x}_2^2 \oplus \bx{1} =\bx{0}$.

Let $\bx{d}_1$ and $\bx{d}_2$ be the unit elements on the $\bx{x}_2$-axis  and $\bx{x}_1$-axis, respectively, constructed as described in Subsection \ref{subsec:unity}. Let $\bx{d}_3$ be the unit element that moves the $\bx{x}_1$-axis to the $\bx{x}_2$-axis coordinate-wise, see Subsection \ref{ss:coordinates}.

Now, we distinguish the cases $\qpone$ and $\qmone$ to construct a unipotent element in $\XX$.

If $\qpone$, then the coset $\TT_{\bx{e}_2} \bx{e}_3$ has $q-1$ involutions. Therefore, $\KK$ has two missing points which are precisely the parabolic points on the $\bx{x}_1$-axis. Since the $\bx{x}_2$-coordinate of any point on $\bx{x}_1$-axis is $\bx{0}$, in this case, there exists an element $\bx{c} \in \KK$ such that $\bx{c}^2\oplus \bx{1}=\bx{0}$. This means that, on the $\bx{x}_1$-axis, the points with the homogeneous coordinates $(\pm \bx{c},\bx{0},\bx{1})$ lie on the conic $\Qq$ so they are parabolic and the construction of one of these parabolic points gives a unipotent element in the black box group $\XX$. Observe that the multiplicative order of the elements $\pm \bx{c} \in \KK$ is 4. Therefore, the construction of an element of order 4 in the multiplicative group of $\KK$ gives us a unipotent element in $\XX$. To that end, we choose a random element $\bx{u} \in \KK$ and construct the sequence
\[
\bx{u}^n, \bx{u}^{2n}, \bx{u}^{2^2n}, \ldots, \bx{u}^{2^mn}=1
\]
by using the multiplication in $\KK$. Obviously, if $4$ divides the multiplicative order of $\bx{u}$, then the construction of this sequence for $\bx{u}$ produces a unipotent element; we detect it as a failure in reification of an involution at some step of construction. Because of probabilistic nature of our algorithms, there is a tiny possibility that the resulting element is semisimple -- but we can use our unipotency test, Lemma \ref{lem:test:uni}, to decide whether this element is unipotent or not. If we can not find a unipotent element from this sequence, then we choose another element in $\KK$ randomly and repeat this procedure. Note that at least half of the elements in $\KK$ have multiplicative orders divisible by $4$.

If $\qmone$, then the coset $\TT_{\bx{e}_2} \bx{e}_3$ has $q+1$ involutions. Since 4 does not divide $q-1$ in this case, there exists no element in $\KK$ satisfying $\bx{x}_1^2\oplus \bx{1}=\bx{0}$. Therefore, we should find a solution of the equation $\bx{x}_1^2\oplus \bx{x}_2^2\oplus \bx{1}=\bx{0}$ in $\KK$. To this end, we search for random elements $\bx{x}, \bx{y} \in \KK$ such that $\ominus \bx{x}^2 \ominus \bx{y}^2$ has a square root in $\KK$, which can be checked by using Tonelli-Shanks algorithm. By Lemma \ref{lem:quad:res},  for random $\bx{x}, \bx{y} \in \KK$, $\ominus \bx{x}^2 \ominus \bx{y}^2$ is a non-zero quadratic residue with probability $1/2-1/2q^2$. Assume that $\ominus \bx{x}^2 \ominus \bx{y}^2 = \bx{c}^2$ for some $\bx{c} \in \KK$, and we compute this $\bx{c}\in \KK$ by using Tonelli-Shanks algorithm applied in $\KK$. Then we have $\bx{x}^2\oplus \bx{y}^2\oplus \bx{c}^2=\bx{0}$, or equivalently, $\frac{\bx{x}^2}{\bx{c}^2} \oplus \frac{\bx{y}^2}{\bx{c}^2}\oplus \bx{1}=\bx{0}$. We set $\bx{a}=\bx{x}/ \bx{c}$ and $\bx{b}=\bx{y}/ \bx{c}$. Now, recall that $\bx{b}^{\bx{d}_3}$ lies on the $\bx{x}_2$-axis and its coordinate is the same as the coordinate of the element $\bx{b}$ on the $\bx{x}_1$-axis. Now, the intersection of the lines $\bx{e}_1 \vee \bx{b}^{\bx{d}_3}$ and $\bx{e}_2\vee \bx{a}$ has the homogenous coordinate $(\bx{a}, \bx{b}, \bx{1})$. Clearly, the construction of this intersection point produces a candidate unipotent element in $\XX$. Finally, we use unipotency test, Lemma \ref{lem:test:uni}, to check that it is indeed a unipotent element.

To find the characteristic of the underlying field, we compute the order of the unipotent element that we constructed.
\end{proof}

We  tested our algorithm in GAP for finding unipotent elements in $\so_3(\bF_p)$ for $30$-digit primes like $p=115756986668303657898962467957$: it works.

\section{Coordinatization of the action of $\XX$ on $\Ii$, Proof of Theorem \ref{SL2-SO3}} \label{sec:coordinatisationaction}

The proof of Theorem \ref{SL2-SO3} (a) follows from Theorem \ref{psl2-pgl2} and the proof for part (b), namely, the construction of a black box field $\KK$, follows from the constructions in Section \ref{sec:coordinatisation}.

\subsection{Construction of the morphism $\XX \longrightarrow \so_3(\KK)$}\label{morph:xtoso3}

The aim of this section is to represent the action of an arbitrary element $\bx{x}\in \XX$ on the projective plane $\Pp$  by a $3\times 3$ matrix $\varphi(\bx{x})$ with coefficient in $\KK$. We shall consider several cases:

\textsc{Case 1}. We set
\[
\varphi(\bx{e}_1) = \bbm \bx{1} & \bx{0} &\bx{0} \\ \bx{0} & \ominus\bx{1} &\bx{0} \\ \bx{0} & \bx{0}& \ominus\bx{1} \ebm, \quad
\varphi(\bx{e}_2) = \bbm \ominus\bx{1} & \bx{0} &\bx{0} \\ \bx{0} & \bx{1} &\bx{0} \\ \bx{0} & \bx{0}& \ominus\bx{1} \ebm, \quad
\varphi(\bx{e}_3) = \bbm \ominus\bx{1} & \bx{0} &\bx{0} \\ \bx{0} & \ominus\bx{1} &\bx{0} \\ \bx{0} & \bx{0}& \bx{1} \ebm.
\]

\textsc{Case 2}. Now we compute $\varphi(\bx{u})$ for an arbitrary involution $\bx{u} \in \XX$  in ``general position'' in the sense that $\bx{u}$ does not commute with any of $\bx{e}_i$, $i=1,2,3$.

If $\bx{u} \in \XX$ then involutions $\bx{u}_i = \bx{e}_i^\bx{u}$, $i=1,2,3$, represent the vectors $\epsilon_i^\bx{u}$ in the projective plane $\Pp$.  We can compute the homogeneous  coordinates $(\bx{u}_{i1}, \bx{u}_{i2}, \bx{u}_{i3})$ of $\bx{u}_i$ using construction from Section~\ref{ss:coordinates}. The vector $(\bx{u}_{i1}, \bx{u}_{i2}, \bx{u}_{i3})$ is a scalar multiple of $\epsilon_i^\bx{u}$. We have to normalize it by finding a scalar $\bx{c}_i\in \KK$ such that
\[
\bx{c}_i^2(\bx{u}_{i1}^2 + \bx{u}_{i2}^2 + \bx{u}_{i3}^2) = \mathsf{1}
\]
which is done by taking a square root
\[
\bx{c}_i = \pm \sqrt{\frac{\mathsf{1}}{\bx{u}_{i1}^2 + \bx{u}_{i2}^2 + \bx{u}_{i3}^2}}
\]
(see Section~\ref{ss:squareroot}).
The choice of signs $\pm$ is dictated by the need to make the matrix
\[
U=\left(\bx{u}'_{ij}\right) = \left( \frac{\bx{u}_{ij}}{\bx{c}_i}\right)
\]
an involution from $\so_3(\KK)$; that is, $U$ has to have determinant $1$ and be symmetric.

The choice of signs could happen to be not unique and defined up to simultaneous change of two signs, that is, up to multiplication of $U$ on the right by one of the matrices $\varphi(\bx{e}_i)$. Since $U$ and $\varphi(\bx{e}_i)$ are involutions, their product $U\varphi(\bx{e}_i)$ can happen to be an involution if and only if $U$ and $\varphi(\bx{e}_i)$ commute, which is excluded by our choice of $\bx{u}$.

\textsc{Case 3}. Now let  $\bx{u}\in\XX$ be an involution not in general position, say $\bx{u} \in C_{\XX}(\bx{e}_1)$. Recall that $\CC=C_{\XX}(\bx{e}_1)$ is a dihedral group. If $\bx{u} = \bx{e}_1$, we are in Case 1. If $\bx{u} \ne \bx{e}_1$, we do random search for an involution $\bx{v} \in \XX$ such that $\bx{v}$ and $\bx{w}:=\bx{u}^\bx{v}$ do not commute with any $\bx{e}_1,\bx{e}_2,\bx{e}_3$ (this condition is satisfied with probability $1-O(\frac{1}{q}$)). Then $\bx{u} = \bx{v}\bx{w}\bx{v}$ and we can compute $\varphi(\bx{v})$ and $\varphi(\bx{w})$ as in Case 2 and then compute
\[
\varphi(\bx{u}) = \varphi(\bx{v})\varphi(\bx{w})\varphi(\bx{v}).
\]

\textsc{Case 4}. This is the general case. By Lemma~\ref{lm:bireflectivity}, we know that every $\bx{x}\in\XX$ is either an involution, or  a product of two or three involutions, say $\bx{x} = \bx{u}\bx{v}$; so we compute
\[
\varphi(\bx{x}) = \varphi(\bx{u})\varphi(\bx{v}),
\]
where $\varphi(\bx{u})$ and $\varphi(\bx{v})$ are computed as in Cases 2 and 3.

This gives us an algorithm constructing a morphism $\XX \longrightarrow \so_3(\KK)$.

\subsection{Construction of the  morphism $\so_3(\KK) \longrightarrow \XX$} \label{morph:so3tox}

It is well known that each element $r$ in $\so_3(\KK)$ is either an involution or product of two involutions. In case $r$ is not an involtuion, we can write $r$ as a product of two involutions using Lemma \ref{lm:bireflectivity}. Therefore it will suffice to compute $\varphi^{-1}(r)$ for an involution $r\in\so_3(\KK)$.

We shall think of $r$  as matrix in the same orthonormal basis in which
\[
\varphi(\bx{e}_1) = \bbm \bx{1} & \bx{0} &\bx{0} \\ \bx{0} & \ominus\bx{1} &\bx{0} \\ \bx{0} & \bx{0}& \ominus\bx{1} \ebm, \quad
\varphi(\bx{e}_2) = \bbm \ominus\bx{1} & \bx{0} &\bx{0} \\ \bx{0} & \bx{1} &\bx{0} \\ \bx{0} & \bx{0}& \ominus\bx{1} \ebm, \quad
\varphi(\bx{e}_3) = \bbm \ominus\bx{1} & \bx{0} &\bx{0} \\ \bx{0} & \ominus\bx{1} &\bx{0} \\ \bx{0} & \bx{0}& \bx{1} \ebm.
\]
As it was with computation of $\varphi$, we can easily reduce computation of $\varphi^{-1}(r)$ to the case when $r$ is in general position, that is, $r$ does not commute with any $\varphi(\bx{e}_i)$, $i=1,2,3$.

Being an involution, $r$ is a symmetric matrix; denote its rows as $r_1,r_2,r_3$. Now construct in $\Pp$ points $\bx{s}_i$ which have in the homogeneous coordinates associated with the basis $\bx{e}_1,\bx{e}_2,\bx{e}_3$ the coordinated vectors $r_i$, $i=1,2,3$. The preimage $\bx{s}=\bxg{\varphi}^{-1}(r)$ satisfies the condition
\[
\bx{e}_i^\bx{s} = \bx{s}_i, \quad i=1,2,3.
\]
and is in general position with respect to $\{\bx{e}_i\}$; therefore $\bx{s}$ is uniquely defined by these conditions.

We compute an involution $\bx{t}_1\in \XX$ such that $\bx{e}_1^{\bx{t}_1} = \bx{s}_1$, Lemma \ref{lm:bisection}. Then the element (not necessarily an involution) $\bx{x}=\bx{s}\bx{t}_1$ belongs to $\CC = C_{\XX}(\bx{e}_1)$ and sends $\bx{e}_2$ to $\bx{e}_2^{\bx{x}} = \bx{e}_2^{\bx{s}\bx{t}_1} = \bx{s}_2^{\bx{t}_1} \in\CC$. We solve the conjugation problem once more, this time in $\CC$, and identify this element $\bx{x}\in \CC$; it is defined uniquely up to multiplication by an element from $\EE = \langle \bx{e}_1,\bx{e}_2\rangle$, so we get a coset $\EE \bx{x}$ as an answer. Now $\bx{s} \in \EE \bx{x}\bx{t}_1$, and, being in general position, is the only involution there.

\subsection{Construction of a morphism $\so_3(\bF_q) \rightarrow \so_3(\KK)$}

Let $\bF_q$ be a standard explicitly given finite field of order $q$ and $\bF_p$ be its prime subfield. Assume also that $\KK_0$ is the prime subfield of $\KK$. Then the isomorphism $\bF_p \rightarrow \KK_0$ can be extended to an isomorphism in time polynomial in the input length to an isomorphism $\bF_q \rightarrow \KK$ \cite{maurer07.427}.

\section{Complexities} \label{sec:complexity}

In this section, we compute the complexities of the main procedures presented in this paper. Let $\YY$ be a black box group encrypting $\psl_2(\bF)$ for some finite field $\bF$ of odd characteristic. Let $\mu$ denote  an upper bound on the time requirement for each group operation in $\YY$ and $\xi$ an upper bound on the time requirement, per element, for the construction of random elements of $\YY$. Let $E$ be a global exponent for $\YY$.

In the sequel, we are going to construct a black box group $\XX$ encrypting $\so_3(\bF)$ from the black box group $\YY$. By the construction of this black box group $\XX$, see Subsection \ref{subsec:direct} and Theorem \ref{psl2-pgl2},  the time requirement for each group operation in $\XX$ becomes at most $4\mu$, see Equation (\ref {eq:semi1}) in Subsection \ref{subsec:direct}. An upper bound on the time requirement, per element, for the construction of random elements of $\XX$ is $2\xi$ since we compute in the direct product $\YY \times \YY$.  Moreover, $E$ is replaced by $2E$ when we compute with the black box group $\XX$. For simplicity, we shall denote $E$ as an exponent for both $\YY$ and $\XX$.

We shall express complexities of our procedures in terms of $\mu$, $\xi$ and $E$. We set $E=2^mn$ where $(2,n)=1$.

\subsection{Constructing an involution in $\YY$ (and in $\XX$)} \label{comp:invo}

At least the quarter of elements in $\YY$ and in $\XX$ are of even order \cite[Corollary 5.3]{isaacs95.139}, therefore an involution can be constructed from a random element by repeated square-and-multiply method in time $O(\xi+\mu\log E)$.

\subsection{Centralizer of an involution $\bx{s}$ in $\YY$ (and in $\XX$)} \label{comp:cent}

By the arguments at the end of Subsection \ref{sec:centralizerofprotoinvolution}, a generating set for $\CC_\YY(\bx{s})$ and  $\CC_{\XX}(\bx{s})$ can be constructed in time $O(\xi\log \log E +\mu\log E\log \log E)$.

\subsection{Unipotency test, Lemma \ref{lem:test:uni}} \label{comp:unipotency:test}

For a given involution $\bx{i} \in \XX$ and a random element $\bx{x} \in \XX$, the running time for our test which decides whether the element $\bx{i}\bx{i}^{\bx{x}}$ is unipotent or not is dominated by the complexity of the construction of $\CC_\XX(\bx{i})$. Therefore, the running time for our unipotency test is $O(\xi\log \log E +\mu\log E\log \log E)$.

\subsection{Reification of an involution in $\XX$, Theorems \ref{psl2-pgl2} and \ref{lm:reif}} \label{comp:reif}

We present the complexity for the algorithm in Theorem \ref{lm:reif}. The computation of the complexity for Theorem \ref{psl2-pgl2} is the same.

Given two involutions $\bx{s},\bx{t}\in \XX$, we shall find the complexity of constructing the involution $\bx{j}:=\bx{s}\boxtimes \bx{t}$, if exists, which commutes with both $\bx{s}$ and $\bx{t}$. We set $\bx{z}=\bx{s}\bx{t}$ and check whether $\bx{z}$ has odd or even order which takes time $O(\mu \log E)$.

If $\bx{z}$ has even order then $\bx{j} \in \langle \bx{z} \rangle$ can be computed in time $O(\mu \log E)$, giving the total time $O(\mu\log E)$. If $\bx{z}$ has odd order then, as in Subsection \ref{comp:cent}, $\CC_\XX(\bx{s})$ can be constructed in time $O(\xi\log\log E+\mu\log E\log \log E)$. At this point, we can check whether $\bx{z}$ is unipotent or not, see Subsection \ref{comp:unipotency:test}. Assume that $\bx{z}$ is not unipotent. Note that the elements in the generating set for $\CC_\XX(\bx{s})$ which are not involutions can be taken to be generators for the torus $\TT_\bx{s}$. Let $\SS_{\TT_\bx{s}}$ be a generating set for $\TT_\bx{s}$. By \cite[I.8]{mitrinovic1996}, we can take $|\SS_{\TT_\bx{s}}|=O(\log \log |\bF|)$. Clearly $\SS=\SS_{\TT_\bx{s}}\cup \{\bx{z}\}$ is a generating set for $\XX$ and computing the action of $\bx{j}$ on $\SS$ takes $O(\mu \log \log |\bF|)$ time. Hence, we run the product replacement algorithm on $\SS$ to construct a random element $\bx{x}$ together with its conjugate $\bx{x}^\bx{j}$. Since the elements of the form $\bx{x}^\bx{j}\bx{x}^{-1}$ have odd orders with probability bounded from below by a constant, see \cite{parker10.885}, the construction of $\CC_\XX(\bx{j})$ takes $O(\xi\log\log E+\mu\log E\log \log E)$ time. Finally, the involution $\bx{j}$ can be constructed from an element of even order from the torus in $\CC_\XX(\bx{j})$ by square-and-multiply method. Hence, if $\bx{z}$ has odd order, the construction of the involution $\bx{j}$ is $O(\xi\log\log E + \mu\log E \log \log E)$ time.

\subsection{A line through $\bx{s}$ and $\bx{t}$}\label{comp:line}
 
As this is an application of a reification of an involution, if the involution $\bx{j}:=\bx{s}\boxtimes \bx{t}$ exits then the total time needed to construct $\bx{j}$ is $O(\xi\log \log E+\mu\log E\log \log E)$. If $\bx{j}$ does not exist for these particular involutions $\bx{s}$ and $\bx{t}$ then the reification process returns a unipotent element $\bx{u}$. In this case, we construct a parabolic line $\langle \bx{u}^{\TT_{\bx{s}}}\rangle \bx{s}$  in time $O(\xi\log\log E + \mu\log E \log \log E)$.

\subsection{Intersection of two non-parabolic lines $\kk$ and $\ll$} \label{comp:inter}

Given involutions $\bx{s}_1,\bx{s}_2, \bx{t}_1,\bx{t}_2 \in \XX$, where $\bx{s}_1,\bx{s}_2$ define a line $\kk$ and $\bx{t}_1,\bx{t}_2$ define a line $\ll$, the intersection of $\kk$ and $\ll$, if exists, is the involution  $(\bx{s}_1\boxtimes \bx{s}_2) \boxtimes (\bx{t}_1 \boxtimes \bx{t}_2)$. Therefore, it can be computed in time $O(\xi\log \log E+\mu\log E\log \log E)$, see Subsection \ref{comp:reif}.

\subsection{Tonelli-Shanks algorithm for tori in $\XX$, Lemma \ref{lm:Tonelli-Shanks}} \label{comp:toneshan}

We follow the outline presented in the proof of Lemma \ref{lm:Tonelli-Shanks}.
Let $\TT$ be a cyclic black box group. We use the exponent $E=2^mn$, $n$ odd, for $\TT$. Let $\bx{z}\in \TT$ be an element that has a square root in $\TT$. Checking whether $\bx{z}$ has odd or even order takes $O(\mu \log E)$ time. If $|\bx{z}|$ is odd then the square root of $\bx{z}$, which is $\bx{z}^{(|\bx{z}|+1)/2}$, can be constructed in time $O(\mu \log E)$. If $|\bx{z}|$ is even then we need to look for an element of maximal 2-height in $\TT$. Observe that the proportion of the elements of maximal 2-height in $\TT$ is at least $1/2$ and computing the 2-height of an arbitrary element takes  $O(\mu \log E)$ time. The elements $\bx{a},\bx{b},\bx{c}$ and the corresponding non-negative integer $d$ in the proof of Lemma \ref{lm:Tonelli-Shanks} can be set up in time $O(\mu\log E)$ and $O(\mu m)$, respectively. As the recursion has at most $m$ steps and each step takes at most $O(\mu \log E)$ time, the over all construction takes  $O(\xi + \mu m \log E)$ time.

\subsection{Bisection of angles, Lemma \ref{lm:bisection}} \label{comp:bisect}

Given two conjugate involutions $\bx{i},\bx{j} \in \XX$, we shall find the complexity of constructing a conjugating involution $\bx{x} \in \XX$,  that is, $\bx{i}^\bx{x}=\bx{j}$. The construction of $\bx{x}$ involves finding the square root of $\bx{z}=\bx{i}\bx{j}$ and construction of centralizers of involutions. Therefore it takes $O(\xi \log \log E+ \mu m \log E)$ time, see Subsection \ref{comp:toneshan}. Since $m<\log E$, we have $O(\xi\log \log E + \mu \log^2 E)$.

\subsection{Representation of an arbitrary element as a product of involutions, Lemma \ref{lm:bireflectivity}} \label{comp:proinv}

This is an another application of a reification of an involution, see Subsection \ref{comp:reif}. Hence it takes $O(\xi\log\log E+\mu\log E\log \log E)$ time.

\subsection{Addition and multiplication in $\KK$} \label{comp:aam}

As described in Subsections \ref{sec:add-in-K} and \ref{sec:multiplication-in-K}, addition and multiplication in $\KK$ involve constant number of reifications of involutions. Hence they can be done in time $O(\xi\log\log E +\mu\log E\log \log E)$.

\subsection{Tonelli-Shanks algorithm in $\KK$} \label{comp:toneshaninK}

By Section \ref{comp:aam}, a multiplication in $\KK$ can be done in time $O(\xi\log\log E +\mu\log E\log \log E)$, so following the computations in Section \ref{comp:toneshan}, Tonelli-Shanks algorithm in $\KK$ runs in time $O(\xi m\log E\log\log E+\mu m \log^2E\log\log E)$. Since $m < \log E$,  we have $O(\xi \log^2 E\log\log E+\mu  \log^3E\log\log E)$.

\subsection{Constructing a unipotent element and finding the characteristic, Theorem \ref{cor:unipotent}} \label{comp:uni}

The construction of three commuting involutions in a black box group $\YY$ encrypting $\psl_2(\bF)$ over a field of odd characteristic involves only constructing involutions and their centralizers in $\YY$. Therefore, by Subsection \ref{comp:invo} and \ref{comp:cent}, it can be done in time $O(\xi\log \log E +\mu\log E\log \log E)$. The construction of a black box group $\XX$ encrypting $\so_3$ from the black box group $\YY$ takes $O(\xi\log\log E + \mu\log E \log \log E)$ time by Subsection \ref{comp:reif}. Construction of a black box subgroup encrypting $\Sym_4$ can be done in time $O(\xi\log \log E+\mu\log E\log\log E)$. Hence, we have a set-up for the projective plane  and a black box field $\KK$ in time $O(\xi\log \log E+\mu\log E\log\log E)$.

$\qpone:$ For a random element $\bx{u} \in \KK$, to check whether $\bx{u}$ has a multiplicative order divisible by 4 involves $\log E$ multiplications in $\KK$. Since each multiplication in $\KK$ takes $O(\xi\log\log E +\mu\log E\log \log E)$ time and the orders of the half of the elements in $\KK$ are divisible by 4, the construction of a unipotent element takes at most $O(\xi\log E\log\log E +\mu\log^2 E\log \log E)$ time.

$\qmone:$ By the complexity given in Subsection \ref{comp:toneshaninK}, finding elements $\bx{x}, \bx{y} \in \KK$ where $\ominus \bx{x} \ominus \bx{y}$ is a quadratic residue takes time $O(\xi \log^2 E\log\log E+\mu  \log^3E\log\log E)$. Since the rest of the computation involves addition and multiplication in $\KK$, constructing lines and finding the intersections of lines, the overall construction takes time   $O(\xi \log^2 E\log\log E+\mu  \log^3E\log\log E)$.

\subsection{Morphism $\XX \rightarrow \so_3(\KK)$} \label{comp:xtoso3}

We will find the complexity to represent an involution $\bx{u}\in \XX$ in $\so_3(\KK)$ when $\bx{u}$ does not commute with some commuting right type involutions $\bx{e}_1,\bx{e}_2,\bx{e}_3 \in \XX$.  Then, together with the computations in Subsection \ref{comp:proinv} the complexity of the representation of an arbitrary element follows.

 As in Subsection \ref{comp:uni}, we have a set up for the projective plane and a black box field $\KK$ in time $O(\xi\log \log E+\mu\log E\log\log E)$. Then, the computation of the homogenous coordinates for $\bx{e}_i^\bx{u}=(\bx{u}_{i1},\bx{u}_{i2},\bx{u}_{i3})$ involves finding intersections of the corresponding lines, so it takes $O(\xi\log \log E +\mu\log E\log\log E)$ time. Normalization of $\bx{e}_i^\bx{u}=(\bx{u}_{i1},\bx{u}_{i2},\bx{u}_{i3})$ involves the computation of $\frac{1}{\bx{u}_{i1}^2+\bx{u}_{i2}^2+\bx{u}_{i3}^2}$ and its square root $\bx{c}_i$ in $\KK$. The computation of the quotient $\frac{1}{\bx{u}_{i1}^2+\bx{u}_{i2}^2+\bx{u}_{i3}^2}$ takes  $O(\xi\log\log E+\mu\log E\log \log E)$ time and the computation of square roots $\bx{c}_i$ in $\KK$  takes $O(\xi \log^2 E\log\log E+\mu  \log^3E\log\log E)$ time, see Subsection \ref{comp:toneshaninK}. Hence, the normalization of $\bx{e}_i^\bx{u}$ can be done in time $O(\xi \log^2 E\log\log E+\mu  \log^3E\log\log E)$.  The time needed to compute the matrix $\UU=\left(  \frac{\bx{u}_{ij}}{\bx{c}_i}\right)$ is
$O(\xi \log \log E+\mu\log E\log \log E)$. Hence adding all the complexities above, we get $O(\xi\log^2 E\log\log E+\mu \log^3E\log\log E)$.

\subsection{Morphism $\so_3(\KK) \rightarrow \XX$}

Let $r \in \so_3(\KK)$ be an arbitrary element. To write $r$ as a product of two involutions in $\so_3(\KK)$, we apply the same arguments in Lemma \ref{lm:bireflectivity}. In this case, we take any $s \in \so_3(\KK)$ and look for an involution $t\in \so_3(\KK)$ (a symmetric matrix of order 2) which inverts both $r$ and $s$. Since $r$ and $s$ matrices over black box field $\KK$, to find such an involution $t$, we solve the corresponding systems of linear equations in $\KK$. Hence, this can be done in time $O(\xi\log\log E+\mu\log E\log \log E)$.

Now, let $r\in \so_3(\KK)$ be an involution. As in Subsection \ref{comp:xtoso3}, it is enough to find the complexity for the construction of a black box group element $\bx{r} \in \XX$ encrypting $r$ when  $r$ does not commute with $\varphi(\bx{e}_1), \varphi(\bx{e}_2), \varphi(\bx{e}_3)$.  Let $r_1, r_2, r_3$ be the rows of $r$. Constructing the involutions $\bx{s}_1,\bx{s}_2,\bx{s}_3 \in \Pp$ with the homogenous coordinates $r_1, r_2, r_3$ involve only constant number of reifications of involutions and intersections of lines. Therefore, we can construct these involutions in time  $O(\xi\log \log E+\mu\log E\log \log E)$. Constructing the desired involution $\bx{r} \in \XX$ such that $\bx{e}_i^\bx{r}=\bx{s}_i$ involves two times bisection of angles and the construction of centralizers of involutions in $\XX$ so it takes $O(\xi\log \log E + \mu  \log^2 E\log \log E)$ time by Subsections \ref{comp:cent} and \ref{comp:bisect}. Hence the running time for the construction of the black box group element encrypting $r$ is $O(\xi\log \log E + \mu  \log^2 E\log \log E)$.

\section*{Acknowledgements}

This paper would have never been written if the authors did not enjoy the warm hospitality offered to them at the Nesin Mathematics Village in \c{S}irince, Izmir Province, Turkey, in Summers 2011--17; our thanks go to Ali Nesin and to all volunteers, staff, and students who have made the Village a mathematical paradise.

We thank  Adrien Deloro and Roman Kossak for many fruitful discussions, and Alexander Konovalov and Chris Stephenson who helped us to clarify links with the computer science. 

Our work was partially supported by the Marie Curie FP7 Initial Training Network MALOA (PITN-GA-2008-MALOA no. 238381) and by CoDiMa (CCP in the area of Computational Discrete Mathematics; EPSRC grant EP/M022641/1), and by The Dame Kathleen Ollerenshaw Trust.

In the project, we were using the GAP software package by The GAP Group, GAP--Groups, Algorithms, and Programming, Version 4.8.4; 2016 (\url{http://www.gap-system.org}).

\end{document}